\title{Maximal amenability and disjointness for the radial masa}
\author{Chenxu Wen}
\date{}
\newcommand\norm[1]{\left\lVert#1\right\rVert}
\newtheorem{thm}{Theorem}
\newtheorem*{thm*}{Theorem}
\newtheorem*{cor*}{Corollary}
\newtheorem*{conj*}{Conjecture}
\newtheorem{prop}[thm]{Proposition}
\newtheorem{lem}[thm]{Lemma}
\newtheorem*{defn*}{Definition}
\newtheorem*{que*}{Question}
\theoremstyle{definition}
\newtheorem{rem}[thm]{Remark}
\begin{document}

\maketitle

\begin{abstract}
We prove that the radial masa $C$ in the free group factor is disjoint from other maximal amenable subalgebras in the following sense: any distinct maximal amenable subalgebra cannot have diffuse intersection with $C$.
\end{abstract}

\section*{Introduction}
Amenability is one of the most central notions in the study of von Neumann algebras. Amenable von Neumann algebras are very well understood: Connes' work on the characterization of amenability \cite{connes76classification} is a milestone for the entire theory. Thus, in order to study non-amenable von Neumann algebras, it is natural to consider their amenable subalgebras.

Fuglede and Kadison \cite{fugledekadison51normalcy} showed that for any II$_1$ factor, there always exists a maximal hyperfinite subfactor, thus answered a question of Murray and von Neumann about the double relative commutant. Later on, during a conference at Baton Rouge in 1967, Kadison asked a series of famous questions about von Neumann algebras (see for example \cite{ge03kadisonquestions}). Among them is the following:

\begin{que*}
Is every self-adjoint element in a II$_1$ factor contained in a hyperfinite subfactor?
\end{que*}

Popa answered this question in the negative, by showing that the generator masa in the free group factor is maximal amenable, \cite{popa83maxinjective}. 

If $(M,\tau)$ is a finite von Neumann algebra with a faithful normal tracial state $\tau$ and $\omega$ is a free ultrafilter, we'll write $M^{\omega}$ as the ultraproduct of $(M,\tau)$. The key insight of Popa \cite{popa83maxinjective} is that the inclusion $A\subset M$, where $M=L(\mathbb{F}_n)$ with $n\geq 2$ and $A$ the generator masa, satisfies the \textit{asymptotic orthogonality property}, which we define below:

\begin{defn*}
Let $A\subset M$ be an inclusion of finite von Neumann algebras. We say that the inclusion satisfies the asymptotic orthogonality property (AOP for short), if for any free ultrafilter $\omega$ on $\mathbb{N}$, $(x_n)_n\in A'\cap M^{\omega}\ominus A^{\omega}$ and $y_1, y_2\in M\ominus A$, we have that $y_1(x_n)_n\perp (x_n)_ny_2$.
\end{defn*}

Since Popa, there are many results considering maximal amenable subalgebras. Ge \cite[Theorem 4.5]{ge96maxinjective} showed that any diffuse amenable finite von Neumann algebra can be realized as a maximal amenable subalgebra of the free group factor. Shen \cite{shen06tensor} showed that the $\bigotimes_{n\in \mathbb{N}} A$ is maximal amenable inside $\bigotimes_{n\in \mathbb{N}} M$, where $A$ is the generator masa in the free group factor $M$, thus gave an example of a maximal masa in a McDuff-II$_1$ factor. Cameron, Fang, Ravichandran and White \cite{CFRW2010radialmasa} proved that the radial masa in the free group factor is maximal amenable. Brothier \cite{arnaud14maxinjective} gave an example in the setting of planar algebras.  Boutonnet and Carderi \cite{boutonnet13hyperbolic} showed that the subalgebra coming from a maximal amenable subgroup in a hyperbolic group, is maximal amenable. Houdayer \cite{houdayer14exoticmaxinjective} showed that the factors coming from free Bogoljubov actions contains concrete maximal amenable masa's, see also \cite{houdayer14gammastability}. All these results use Popa's AOP approach. 

Very recently a new method via the study of centralizers of states, is developed by Boutonnet and Carderi \cite{boutonnet14centralizer}. In particular, they are able to show that the subalgebra coming from the upper-triangular matrix subgroup of $SL(3,\mathbb{Z})$, is maximal amenable inside $L(SL(3,\mathbb{Z}))$. See Ozawa's remark \cite{ozawa15centralizer} for an application of this new approach.

Another central theme in the theory is the study of the free group factors (\cite{ringsofoperators4}, \cite{freeentropyIII}, \cite{primenessforfreegroupfactors}, \cite{ozawa04solidity}, \cite{op10uniquecartan}). One of the motivating questions of this paper is a conjecture by J. Peterson (see the end of \cite{petersonthom11cocycle}):
\begin{conj*}
For the free group factor, any diffuse amenable subalgebra is contained in a \textit{unique} maximal amenable subalgebra.
\end{conj*}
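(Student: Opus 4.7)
The plan is to reduce the conjecture to a disjointness statement: if $N_1$ and $N_2$ are distinct maximal amenable subalgebras of $L(\mathbb{F}_n)$, then $N_1\cap N_2$ is not diffuse. Granting this, the existence of a maximal amenable subalgebra containing a given diffuse amenable $B$ is immediate from Zorn's lemma applied to the poset of amenable subalgebras of $L(\mathbb{F}_n)$ containing $B$, and uniqueness follows at once: any two maximal amenable subalgebras containing $B$ would place the diffuse algebra $B$ inside their intersection, contradicting disjointness. So the real content of the conjecture is the general disjointness statement.

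The natural approach is to extend the AOP technology beyond the generator masa (Popa) and the radial masa (CFRW, and the present paper) to an arbitrary maximal amenable inclusion $N_1\subset L(\mathbb{F}_n)$. Concretely, given a diffuse amenable $B\subset N_1\cap N_2$, amenability of the inclusion $B\subset N_2$ should produce, via Connes' hypertrace characterization, a bounded net $(x_n)_n\in B'\cap N_2^{\omega}$, orthogonal to $N_1^{\omega}$, playing the role of an approximate identity of $N_2$ over $N_1$. If the inclusion $N_1\subset L(\mathbb{F}_n)$ satisfied an AOP with $N_1$ in the role of the subalgebra $A$ of the definition, then for any $y_1,y_2\in N_2\ominus N_1$ one would have $y_1(x_n)_n\perp (x_n)_n y_2$; combining this orthogonality with the near-commutation of $(x_n)_n$ with $N_2$ and standard tracial estimates would force $N_2\ominus N_1=0$, hence $N_2\subset N_1$, and by symmetry equality.

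The main obstacle, and the reason the conjecture is still open, is that AOP has only been verified for inclusions where detailed combinatorial information about the ambient free group factor is available: Popa's proof exploits the tree structure of the Cayley graph of $\mathbb{F}_n$ relative to the generators, and the radial-masa proof goes through the Pytlik--Szwarc representation together with a careful length decomposition of words. Ge's result that every diffuse amenable finite von Neumann algebra can be realized as a maximal amenable subalgebra of $L(\mathbb{F}_n)$ rules out any hope of a uniform combinatorial model for arbitrary $N_1$. A promising alternative would be the state-centralizer method of Boutonnet and Carderi, which sidesteps AOP altogether, but adapting it would require producing, intrinsically from an abstract maximal amenable $N_1\subset L(\mathbb{F}_n)$, a state whose centralizer recaptures $N_1$; finding such a state in general seems to require a genuinely new idea, and I would expect this to be where the difficulty truly lies.
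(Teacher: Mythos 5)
The statement you were asked to prove is labelled as a \emph{conjecture} in the paper (attributed to J.~Peterson), and the paper does not prove it; it only establishes the special case in which one of the two maximal amenable subalgebras is the radial masa (Theorem~\ref{thm: disjointness for radial masa}, via Propositions~\ref{prop: s-AOP implies disjointness} and~\ref{prop: s-AOP radial masa}). So there is no proof in the paper to compare yours against, and your submission, as you yourself acknowledge, is not a proof either: the entire analytic content --- verifying a strong asymptotic orthogonality property (or any substitute for it) for an \emph{arbitrary} maximal amenable subalgebra of $L(\mathbb{F}_n)$ --- is missing, and this is precisely the open part of the problem. Your honest assessment that Ge's realization theorem blocks any uniform combinatorial model, and that the Boutonnet--Carderi centralizer method would require producing a suitable state intrinsically from an abstract maximal amenable inclusion, is accurate.

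That said, the parts of your outline that can be checked are essentially correct and do match the paper's strategy for the case it handles. The reduction of uniqueness to disjointness (two maximal amenable subalgebras with diffuse intersection must coincide), together with Zorn's lemma for existence, is exactly how the conjecture is organized, and it is the disjointness statement that the paper proves for $C$ the radial masa: there the mechanism is a strengthened AOP relative to every diffuse subalgebra $B\subset C$ (Proposition~\ref{prop: s-AOP radial masa}), fed into Proposition~\ref{prop: s-AOP implies disjointness}, which uses singularity of $C$, strong solidity of $L(\mathbb{F}_n)$, and Popa's intertwining theorem to produce the unitary $u\in (Qz)'\cap (Qz)^{\omega}$ with $E_{C^{\omega}}(u)=0$. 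One small correction to your sketch: that unitary comes from Popa's intertwining-by-bimodules argument (as in \cite[Lemma 2.2]{CFRW2010radialmasa}), not directly from Connes' hypertrace characterization of amenability. If you want to contribute to the special case actually treated here, the place to work is the R{\u a}dulescu bimodule decomposition of $L^2(M)\ominus L^2(C)$ and the right-modular idempotents $L_k'$, which is where the paper's real effort lies.
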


Houdayer's result on Gamma stability of free products \cite[Theorem 4.1]{houdayer14gammastability} implies that the generator masa satisfies Peterson's conjecture. The proof again is relying on the AOP. See also Ozawa's proof \cite{ozawa15centralizer} via the centralizer approach.

One subalgebra of the free group factor under intense study is the radial masa. So let $M=L(\mathbb{F}_N)$ with $2\leq N<\infty$ be the free group factor with finitely many generators and denote by $C$ the von Neumann subalgebra of $M$ generated by $\omega_1:=\sum_{g\in \mathbb{F}_N, |g|=1}u_g$. Note that $\omega_1$ is only well-defined for free groups with finitely many generators. It was proved by Pytlik \cite[Theorem 4.3]{pytlik81} that $C$ is a masa in $M$, called the \textit{radial masa} or the \textit{Laplacian masa}. Moreover, R{\u a}dulescu \cite[Theorem 7]{radulescu91radialmasa} showed that $C$ is singular and Cameron, Fang, Ravichandra and White \cite[Corollary 6.3]{CFRW2010radialmasa} proved that it is maximal amenable in $M$. 

Recall that a result of Popa \cite[Corollary 4.3]{popa83orthogonal} shows that generator masa's coming from different generators cannot be unitarily conjugate inside $M$. This implies that the radial masa $C$ cannot be unitarily conjugate with the generator masa $A$ inside $M$. However, whether they are conjugate via some automorphism, is still unknown.

The main result of this paper is the following:

\begin{thm*}
Let $M=L(\mathbb{F}_N)$ with $2\leq N<\infty$ and let $C\subset M$ be the radial masa. Then every amenable subalgebra of $M$ having diffuse intersection with $C$, must be contained in $C$.  
\end{thm*}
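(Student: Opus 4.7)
The plan is to argue by contradiction, adapting the Popa-style AOP argument (which underlies the CFRW proof of maximal amenability of $C$) to the setting in which only the intersection $B := Q \cap C$, rather than all of $C$, is known to lie inside $Q$. Assume $Q \subseteq M$ is amenable, $B$ is diffuse, and $Q \not\subseteq C$; equivalently, $B \subsetneq Q$.

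First I extract an asymptotic centralizer from amenability. The amenability of $Q$ combined with $B \subsetneq Q$ yields, via Popa's standard construction and property Gamma (enjoyed by every amenable II$_1$ factor, after Connes), unitaries $(u_n)_n \subset U(Q)$ such that $(u_n)_n \in Q' \cap M^\omega$, $E_B(u_n) = 0$, and $\|u_n\|_2 = 1$. Projecting orthogonal to $C$, set $x_n := u_n - E_C(u_n) \in M \ominus C$. Since $E_C(u_n) \in C$ commutes with the abelian $B \subseteq C$, we still have $\|[x_n, b]\|_2 \to 0$ for every $b \in B$, so $(x_n)_n \in B' \cap M^\omega$ is represented by a sequence in $M \ominus C$. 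A short commuting-square argument shows $\liminf_n \|x_n\|_2 > 0$: otherwise $u_n$ would be $\|\cdot\|_2$-approximable from $C$ and hence, via $E_Q$, from $B = Q \cap C$, contradicting $E_B(u_n) = 0$ and $\|u_n\|_2 = 1$.

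The main new technical ingredient, which I expect to be the chief obstacle, is a strengthened AOP for the radial masa: for every diffuse $B \subseteq C$, every $(x_n)_n \in B' \cap M^\omega$ represented by a sequence in $M \ominus C$, and all $y_1, y_2 \in M \ominus C$, one has $y_1 (x_n)_n \perp (x_n)_n y_2$ in $L^2(M^\omega)$. I would prove this by rerunning the CFRW proof of AOP—the Chebyshev-polynomial basis of $C$ coming from $\omega_1$, the word-length decomposition of $L^2(M) \ominus L^2(C)$, and the resulting combinatorial orthogonality estimates—and verifying that their estimates depend only on the averaging role played by the masa, which any diffuse $B$ still provides via the functional calculus of suitably chosen self-adjoint generators. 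The difficulty is that the original CFRW argument leans on the full spectral structure of $\omega_1$; one must rewire the estimates so that they require only a diffuse subalgebra.

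Granted the strengthened AOP, the contradiction proceeds along Popa's original lines. Pick $y \in Q \ominus B$ with $\|y\|_2 = 1$ and set $v := y - E_C(y) \in M \ominus C$, which is nonzero because $y \notin C$. Strong AOP gives $v (x_n)_n \perp (x_n)_n v$. On the other hand, the $Q$-centrality $[y, u_n] \to 0$, together with careful treatment of the cross terms $[y, E_C(u_n)]$ and $[E_C(y), u_n]$—most cleanly handled by a preliminary reduction to the commuting-square setting $E_C(Q) \subseteq B$—yields $v(x_n)_n = (x_n)_n v$ in $L^2(M^\omega)$. Combining, $\|v(x_n)_n\|_2 = 0$; but a standard trace computation gives $\|v(x_n)_n\|_2^2 \to \|v\|_2^2 \lim_\omega \|x_n\|_2^2 > 0$, the desired contradiction.
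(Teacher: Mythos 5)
Your overall architecture matches the paper's: the theorem is reduced to a strengthened (relative) AOP for the radial masa, namely that for every diffuse $B\subset C$, every $(x_k)_k\in B'\cap M^\omega\ominus C^\omega$ and all $y_1,y_2\in M\ominus C$ one has $y_1(x_k)_k\perp(x_k)_ky_2$. But there are genuine gaps. First, that strengthened AOP is the entire technical content of the paper, and you have only asserted it. ``Rerunning the CFRW estimates'' does not work as stated: the R{\u a}dulescu basis $\{\xi^i_{n,m}\}$ is not right $C$-modular, and the CFRW averaging argument uses the full masa. The paper must introduce a second Riesz basis $\eta^i_{n,m}=K^{-(n+m)/2}\omega_n\xi^i\omega_m$ (which \emph{is} right $C$-modular), prove it is a Riesz basis, define the modular truncation idempotents $L_k'$, and approximate a weakly null sequence of unitaries of $B$ by elements $v_k\in C$ whose Fourier supports in the $\omega_i$'s are finite, disjoint, and spread far apart, so that the vectors $L'_{N_0}(v_ix_k)$ live on essentially orthogonal blocks. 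None of this follows from ``the averaging role of a diffuse subalgebra''; it is where the work is.

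Second, your deduction of the theorem from the strengthened AOP has concrete problems. (i) Your $v=y-E_C(y)$ need not commute with $(x_n)_n$: the cross terms $[E_C(y),u_n]$ and $[y,E_C(u_n)]$ are exactly the obstruction, and the proposed ``reduction to the commuting-square setting $E_C(Q)\subseteq B$'' is unjustified --- that inclusion is generally false and you give no mechanism to force it. The paper sidesteps this by taking $v$ to be a partial isometry in the type II$_1$ summand $Qz$ between orthogonal projections $p_1,p_2$ lying in a masa of $Qz$ that is contained in $C$ (this containment uses solidity of $M$ plus maximal amenability of $C$); then $E_C(v)=p_1E_C(v)p_2=0$ exactly, and $v$ commutes exactly with a unitary $u\in(Qz)'\cap(Qz)^\omega$ with $E_{C^\omega}(u)=0$ produced by Popa's intertwining theorem, so $vu\perp uv$ and $vu=uv$ force $v=0$. (ii) Your central-sequence construction only applies when $Q$ has a type II$_1$ summand; the type I summand admits no such sequence and must be treated separately, which the paper does via Kadison's theorem that masas in type I algebras are regular, combined with strong solidity of $L(\mathbb{F}_N)$ and maximal amenability of $C$. (iii) Even granting commutation, $\|v(x_n)_n\|_2^2\to\|v\|_2^2\lim_\omega\|x_n\|_2^2$ is not a standard trace computation for a non-unitary $(x_n)_n$ that only commutes with $B$; with the paper's unitary $u$ this step is immediate.
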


This is the first example of such disjointness for an maximal amenable subalgebra which is not known to be in a free position. 

The approach taken in this paper is to show a stronger version of Popa's AOP. The main idea is that, according to a computation by R{\u a}dulescu \cite{radulescu91radialmasa}, $\ell^2(\mathbb{F}_N)\ominus L^2(C)$ admits a very nice decomposition as a direct sum of $C$-$C$-bimodules. Moreover, each bimodule contains a Riesz basis whose interaction with the left and right actions of $C$ is very similar to that of the canonical basis of the free group.  In other words, the radial masa $C$ and $L^2(M)\ominus L^2(C)$ behave almost freely.

\section*{Acknowledgement}
We would like to thank Jesse Peterson, for suggesting the question and for many valuable discussions and consistent encouragement. We are very grateful to Cyril Houdayer, R{\' e}mi Boutonnet and Stuart White for a careful reading and numerous helpful suggestions on an early version of the paper. We also thank Sorin Popa and Allan Sinclair for their comments. Last but not least, we would like to thank the anonymous referee for providing many helpful comments.

\section*{Proof of the Theorem}
Recall that a von neumann algebra $M$ is said to be \textit{solid}, if the relative commutant of any diffuse amenable subalgebra is amenable \cite{ozawa04solidity}. $M$ is called \textit{strongly solid}, if for any diffuse amenable subalgebra $D$ of $M$, the normalizer of $D$ generates an amenable subalgebra of $M$ \cite{op10uniquecartan}. Clearly strong solidity implies solidity. It is shown in \cite{op10uniquecartan} that free group factors are strongly solid.

 The following proposition is inspired by \cite[Lemma 3.1, Theorem 3.2]{popa83maxinjective} and \cite[Lemma 2.2, Corollary 2.3]{CFRW2010radialmasa}:
\begin{prop}\label{prop: s-AOP implies disjointness}
Let $M$ be a strongly solid II$_1$ factor and $A\subset M$ a singular masa in $M$. Assume in addition that for any diffuse von Neumann subalgebra $B\subset A$ and any free ulltrafilter $\omega$, the following holds:

for any $(x_k)_k\in B'\bigcap M^{\omega}\ominus A^{\omega}$ and for any $y_1,y_2\in M\ominus A$, we have that $y_1(x_k)_k\perp (x_k)_ky_2$.

Then any amenable subalegbra of $M$ containing $B$, must be contained in $A$.
\end{prop}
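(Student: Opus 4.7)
My plan is to adapt the argument of Popa \cite[Lemma 3.1, Theorem 3.2]{popa83maxinjective} and Cameron--Fang--Ravichandran--White \cite[Lemma 2.2, Corollary 2.3]{CFRW2010radialmasa} showing that AOP implies maximal amenability, to the relative setting where AOP is assumed only with respect to a diffuse subalgebra $B \subset A$.

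First I establish two preliminaries. Applying the AOP hypothesis with the (trivially diffuse) choice $B = A$ recovers Popa's classical AOP; together with singularity of $A$, his theorem yields that $A$ is maximal amenable in $M$. Strong solidity of $M$ implies solidity, hence $B' \cap M$ is amenable for every diffuse $B \subset A$; as it contains the maximal amenable $A$, it must equal $A$. In particular, if $N \supset B$ is any subalgebra then $D := A \cap N \supset B$ is a masa in $N$: any $n \in N$ commuting with $D$ commutes with $B \subset D$, so $n \in B' \cap M = A$ and hence $n \in A \cap N = D$.

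Now let $N$ be amenable with $B \subset N$, and suppose for contradiction that $N \not\subset A$. Using hyperfiniteness of $N$, I produce a sequence $(v_k) \in \mathcal{U}(N)$ with $\tau(v_k) = 0$, asymptotically central in $N$, and weakly null. Asymptotic centrality combined with $B \subset N$ gives $(v_k) \in B' \cap M^\omega$, and weak nullity forces $\|E_A(v_k)\|_2 \to 0$, so $(v_k) \in B' \cap M^\omega \ominus A^\omega$ and the AOP hypothesis applies: $y_1(v_k) \perp (v_k)y_2$ for all $y_1, y_2 \in M \ominus A$. The clean Popa-style contradiction would be to find a nonzero $y \in N \cap \ker E_A$; as $y \in N$ and $(v_k) \in N'$, commutation $y(v_k) = (v_k)y$ in $M^\omega$ holds, so AOP with $y_1 = y_2 = y$ forces $\|y(v_k)\|_2^2 = \langle y(v_k),(v_k)y\rangle = 0$, contradicting $\|y(v_k)\|_2 = \|y\|_2 > 0$.

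The main obstacle is that $N \cap \ker E_A$ may vanish (this occurs precisely when $E_A|_N$ is injective); in the classical setting $A \subset N$ this never happens because $n - E_A(n) \in N$ for every $n \in N$, whereas here $E_A(n)$ need not lie in $N$. To handle the injective case, I would pick $n \in N \setminus A$ and write $a = E_A(n)$, $b = n - a \in M \ominus A$. Centrality $[n,(v_k)] = 0$ in $M^\omega$ rewrites as $b(v_k) - (v_k)b = -[a,(v_k)]$, so AOP with $(y_1,y_2) = (b,b)$ together with $\|[a,(v_k)]\|_2 \leq 2\|a\|_2$ yields $\|b\|_2^2 \leq 2\|a\|_2^2$. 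A second application of AOP with $(y_1,y_2) = (b,b^*)$, expanded via the centrality identity $(v_k)^* n (v_k) = n$ and trace cyclicity, isolates the quantity $\tau^\omega((v_k)^* a (v_k) a)$, which Cauchy--Schwarz bounds above by $\|a\|_2^2$. Combining the two resulting inequalities, for $n$ chosen so that $a$ is self-adjoint (using the flexibility to replace $n$ by $n+n^*$ or to pre-multiply by a suitable unitary of $A$), should force $a = 0$ and hence $n = b \in N \cap \ker E_A$ with $\|n\|_2 > 0$, the desired contradiction.
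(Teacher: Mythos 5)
Your overall strategy (extract a $B$-central sequence orthogonal to $A^\omega$ from the amenable algebra $N$ and feed it into the AOP) is the right one for part of the argument, but there are two genuine gaps. First, the step ``weak nullity forces $\|E_A(v_k)\|_2\to 0$'' is false: weak convergence to $0$ controls $\langle v_k,a\rangle$ for each fixed $a$, not the $L^2$-norm of the projection of $v_k$ onto $L^2(A)$ (any weakly null sequence of Haar unitaries inside $A$ itself is a counterexample). Producing a unitary in $N'\cap N^\omega\ominus A^\omega$ is precisely the content of \cite[Lemma 2.2]{CFRW2010radialmasa}, and it requires the relevant summand of $N$ to be of type II$_1$ together with Popa's intertwining theorem ($N\not\prec_M A$ because a type II$_1$ algebra cannot intertwine into an abelian one); it is not a consequence of choosing the $v_k$ weakly null.

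Second, and more seriously, your argument cannot treat the type I part of $N$, which the paper's proof must split off and handle by a completely different mechanism. If $N$ has a non-abelian type I central summand, every asymptotically central sequence in it asymptotically lies in its center, which is contained in $B'\cap M=A$, so no unitary in $N'\cap N^\omega\ominus A^\omega$ exists there and the AOP gives you nothing. The paper disposes of this case by taking a masa $C$ of the type I summand containing $B$ (hence $C\subset A$), invoking Kadison's theorem that masas in type I von Neumann algebras are regular, and then using \emph{strong} solidity: $A$ and $Q$ both normalize $C$, so they generate an amenable algebra containing $A$, whence $Q\subset A$ by maximal amenability. Your proof never uses strong solidity beyond solidity, which is a sign this case is missing. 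Finally, your closing ``injective $E_A|_N$'' analysis is both incomplete (the claim that the two inequalities force $a=0$ is not established) and unnecessary: on a type II$_1$ summand one takes a masa $C\subset A\cap N$ containing $B$ and a partial isometry $v\in N$ with $vv^*=p_1$, $v^*v=p_2$, $p_1p_2=0$, $p_1,p_2\in C$; then $E_A(v)=p_1E_A(v)p_2=E_A(v)p_1p_2=0$ by commutativity of $A$, so $v\in M\ominus A$ is produced directly and the clean contradiction $vu=uv$ versus $vu\perp uv$ goes through.
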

\begin{proof}

As shown by \cite[Lemma 2.2, Corollary 2.3]{CFRW2010radialmasa}, AOP and singularity imply that $A$ is maximal amenable in $M$.

Let $B\subset Q\subset M$ be an amenable subalgebra. By solidity of $M$, $A\subset B'\bigcap M$ is amenable. Since $A$ is maximal amenable, we conclude $Q'\bigcap Q\subset B'\bigcap M\subset A$.

Let $z$ be the maximal central projection of $Q$ such that $Qz$ is type II$_1$. Now suppose that $z\neq 0$.

Since $Qz$ is amenable and of type II$_1$, Popa's intertwining theorem (\cite[Theorem A.1]{popa03betti}) easily implies that there is a unitary $u\in (Qz)'\bigcap (Qz)^{\omega}$, such that $E_{A^{\omega}}(u)=0$. For a proof, see \cite[Lemma 2.2]{CFRW2010radialmasa}.

Now let $C$ be a masa in $Qz$ which contains $Bz$. Again by solidity and maximal injectivity, $C\subset Az$. Since $Qz$ is of type II$_1$, there exists two non-zero projections $p_1,p_2\in C$ and a partial isometry $v\in Qz$, such that $vv^*=p_1, v^*v=p_2, p_1p_2=0$. Then we have $E_A(v)=E_A(p_1vp_2)=p_1E_A(v)p_2=0$ so that $vu\perp uv$. However we also know that $vu=uv$, hence $v=0$. This contradicts that $p_1,p_2\neq 0$.

Thus, $Q$ has to be of type I. Let $C$ be a masa in $Q$ containing $B$. Again $C\subset A$. By Kadison's result \cite{kadison84diagonal}, $C$ is regular in $Q$. Both $A$ and $Q$ lie in the normalizer of $C$, so
they together generate an amenable algebra containing $A$. By maximal amenability of $A$, it follows that $Q\subset A$.
\end{proof}

Thus, in order to confirm Peterson's conjecture for the radial masa, it suffices to prove the strong-AOP as in Proposition~\ref{prop: s-AOP implies disjointness} for the radial masa. This section is mainly devoted to establish this fact.

Let $\Gamma=\mathbb{F}_{N+1}, N\in \mathbb{N}$. Write $K:=2N+1$ for later use. Denote by $\omega_n=\sum_{g\in G,|g|=n}u_g$, for $n=1,2,3,\cdots$ and let $\omega_0=u_e$. Let $M=L(\Gamma)$ be the free group factor and let $C=\{\omega_1\}''\subset M$ be the radial masa. $\{\omega_n\}_{n\geq 0}$ forms an orthogonal basis for $L^2(C)$. 

Let $\mathcal{K}_i$ be the finite-dimensional subspace of $\mathcal{H}:=L^2(M)$ spanned by all words of length $i$ and we denote by $Q_i$ the orthogonal projection from $\mathcal{H}$ onto $\mathcal{K}_i$. For $\xi\in \mathcal{K}_i$ and $n,m\in \mathbb{N}\bigcup \{0\}$, we define the following
\[\xi_{n,m}:=\dfrac{Q_{i+m+n}(\omega_n \xi \omega_m)}{K^{(n+m)/2}}.\]

R{\u a}dulescu \cite{radulescu91radialmasa} discovered that there is a nice decomposition of $\mathcal{H}\ominus L^2(C)=\bigoplus_{i\geq 1}\mathcal{H}_i$ into a direct sum of $C$-$C$-bimodules, each $\mathcal{H}_i$ has a distinguished unit vector $\xi^i$, which is from $\mathcal{K}_{l(i)}$, for some $l(i)\in \mathbb{N}$, such that $\mathcal{H}_i$ is generated by $\xi^i$ as a $C$-$C$-bimodule. 

Moreover, by \cite[Lemma 3, Lemma 6]{radulescu91radialmasa}, for those $i$ with $l(i)\geq 2$, we have that $\{\xi^i_{n,m}\}_{n,m\geq 0}$ forms an orthonormal basis for $\mathcal{H}_i$. For those $i$ with $l(i)=1$ (there are finitely many such $i$'s), $\{\xi^i_{n,m}\}_{n,m\geq 0}$ is no longer an orthonormal basis for $\mathcal{H}_i$, however  for any $i,j\geq 1$, the linear mapping $T_{i,j}:\mathcal{H}_i\rightarrow\mathcal{H}_j$, given by 
\[T_{i,j}(\xi^i_{n,m})=\xi^j_{n,m},\]
extends uniquely to an invertible bounded operator. Furthermore, there is a universal constant $C_1>0$ such that \[\|T_{i,j}^{\pm 1}\|\leq C_1,\forall i,j\geq 1.\]
\begin{rem}\label{rem: Riesz basis}
Recall that in a separable Hilbert space, a sequence of vectors $\{v_n\}$ forms a \textit{Riesz basis} (for the basics of Riesz basis, see, e.g.\ \cite{rieszbasis}), if  $\{v_i\}$ is the image of some orthonormal basis under some bounded invertible operator. It is also equivalent to the fact that there exists some $A,B>0$ such that for any $(c_n)\in \ell^2$, $A\sum |c_n|^2\leq \norm{\sum c_nv_n}\leq B\sum |c_n|^2$. In this case, every vector $x$ in the Hilbert space has a \textit{unique} decomposition $x=\sum c_nv_n$, for some $(c_n)\in \ell^2$. It follows that  $\left\{\xi^i_{n,m}\right\}_{i\geq 1,n,m\geq 0}$ forms a Riesz basis for $L^2(M)\ominus L^2(C)$. Consequently, for any $x\in L^2(M)\ominus L^2(C)$, there is a \textit{unique} decomposition $x=\sum_{i\geq 1,n,m\geq 0}a^i_{n,m}\xi^i_{n,m}$ for some $\left(a^i_{n,m}\right)_{n,m,i}\in \ell^2$. We call $\{\xi^i_{n,m}\}_{i\geq 1,n,m\geq 0}$ the \textit{R{\u a}dulescu basis} for $L^2(M)\ominus L^2(C)$.

Sometimes it will be convenient to use the following convention: we write $\xi^i_{n,m}$ for all $n,m\in \mathbb{Z}$, where we define $\xi^i_{n,m}=0$ whenever $n<0$ or $m<0$.
\end{rem}

The key computation in \cite{CFRW2010radialmasa} is that when considering the AOP in the case of the radial masa, the R{\u a}dulescu basis plays the same role as the canonical basis for the generator masa case. However, in our approach, the R{\u a}dulescu basis suffers from a lack of right modularity. Instead,  $\{\omega_n \xi^i\omega_m\}$, after proper normalization, is the more natural basis to work with. 

We collect some relations between $\omega_n \xi^i\omega_m$ and $\xi^i_{n,m}$'s, due to R{\u a}dulescu, in the following lemma:
\begin{lem}[Lemma 2, 6 in \cite{radulescu91radialmasa}]\label{lem: relations between different bases}
The following statements hold for all $n,m\geq 0$:\\
(1) If $l(i)\geq 2$, then $\omega_n\xi^i\omega_m=K^\frac{n+m}{2}\xi^i_{n,m}-K^{\frac{n+m-2}{2}}\left(\xi^i_{n,m-2}+\xi^i_{n-2,m}\right)+K^{\frac{n+m-4}{2}}\xi^i_{n-2,m-2}$;\\
(2) If $l(i)=1$, then there is some $\sigma=\sigma(i)\in \{-1,1\}$ such that
\begin{equation*}
\begin{split}
\omega_n\xi^i\omega_m&=K^\frac{n+m}{2}\xi^i_{n,m}-K^{\frac{n+m-2}{2}}\left(\xi^i_{n,m-2}+\xi^i_{n-2,m}+\sigma\xi^i_{n-1,m-1}\right)\\
&+\sum_{k\geq 2}(-\sigma)^k K^{\frac{n+m-2k}{2}}\left(\sigma \xi^i_{n-k-1,m-k+1}+\sigma \xi^i_{n-k+1,m-k-1}+2\xi^i_{n-k,m-k}\right);
\end{split}
\end{equation*}
(3) For all $i,j\geq 1$, the linear mapping $S_{i,j}:\mathcal{H}_i\rightarrow\mathcal{H}_j$ given by 
\[S_{i,j}\left(\omega_n \xi^i\omega_m\right)=\omega_n \xi^j\omega_m, \forall n,m\geq 0,\]
is well-defined and extends to an invertible bounded operator between the two subspaces, with $\sup_{i,j}\|S_{i,j}^{\pm 1}\|\leq C_2,$
for some uniform constants $0<C_2<\infty.$
 
\end{lem}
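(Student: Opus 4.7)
The plan is to derive parts (1) and (2) from the combinatorial structure of the free group and then deduce part (3) operator-theoretically. For (1), the fundamental multiplication identity $\omega_1\omega_n = \omega_{n+1} + K\omega_{n-1}$ for $n\geq 2$ (with the special cases $\omega_1\omega_1 = \omega_2 + (K+1)\omega_0$ and $\omega_1\omega_0 = \omega_1$) follows by counting in the $(2N+2)$-regular Cayley tree: a word of length $n\geq 2$ extends to $K$ words of length $n+1$ and cancels to exactly one word of length $n-1$ under left multiplication by a single generator. Rearranging as $\omega_n = \omega_1\omega_{n-1} - K\omega_{n-2}$, one then proves (1) by a double induction on $n$ and $m$. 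The crucial feature when $l(i)\geq 2$ is that a single application of $\omega_1$ can cancel at most one letter of $\xi^i$, independently of what is happening on the opposite side, so the recursion closes cleanly in the four terms of (1). For (2), the length-1 vector $\xi^i$ can instead be entirely absorbed by cancellations coming from either side, after which the residue still interacts with cancellations from the other side. The sign $\sigma(i)$ records how $\xi^i$ transforms under this meet-in-the-middle interaction with $\omega_1$, and the geometric series in (2) enumerates all depths at which the meeting can occur; this part closely follows R{\u a}dulescu's Lemma 6.

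For part (3), I would reinterpret (1) and (2) as explicit bounded invertible change-of-basis operators on the R{\u a}dulescu basis. Define partial-isometry shifts $L, R$ on $\mathcal{H}_i$ by $L\xi^i_{n,m} = \xi^i_{n-2,m}$ and $R\xi^i_{n,m} = \xi^i_{n,m-2}$, using the convention $\xi^i_{a,b}=0$ whenever $a<0$ or $b<0$. For $l(i)\geq 2$, formula (1) becomes the operator identity
\[
K^{-(n+m)/2}\,\omega_n\xi^i\omega_m \;=\; (I - K^{-1}L)(I - K^{-1}R)\,\xi^i_{n,m},
\]
and since $K\geq 3$ and $\|L\|,\|R\|\leq 1$ (as $\{\xi^i_{n,m}\}$ is orthonormal), the operator $U_i := (I - K^{-1}L)(I - K^{-1}R)$ is bounded invertible with $i$-independent norm and inverse-norm bounds via a Neumann series. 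Formula (2) gives an analogous operator $U_i$ for the finitely many $i$ with $l(i)=1$, which are handled separately. Then $S_{i,j}$ factors as $S_{i,j} = U_j^{-1}\circ T_{i,j}\circ U_i$, and the uniform bounds on $T_{i,j}^{\pm 1}$ combine with those on $U_i^{\pm 1}$ to yield the universal constant $C_2$.

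The main technical obstacle is establishing uniform bounds on $U_i^{-1}$ in the $l(i)=1$ case: the infinite series in (2) does not admit an immediate Neumann-series bound of norm strictly less than $1$ for small $K$. The cleanest route is to verify directly from (2) that $\{K^{-(n+m)/2}\omega_n\xi^i\omega_m\}_{n,m}$ is itself a Riesz basis for $\mathcal{H}_i$ by estimating its Gram matrix against the Riesz-basis constants for $\{\xi^i_{n,m}\}$, whence the bound on $U_i^{-1}$ is immediate. Since only finitely many $i$ have $l(i)=1$, any case-by-case argument ultimately suffices.
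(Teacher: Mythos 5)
First, note that the paper does not prove this lemma: it is imported verbatim from R{\u a}dulescu (Lemmas 2 and 6 of \cite{radulescu91radialmasa}), so there is no in-paper argument to compare yours against. Judged on its own terms, your treatment of part (3) is the sound half of the proposal: rewriting (1) as $\eta^i_{n,m}=(I-K^{-1}L)(I-K^{-1}R)\xi^i_{n,m}$ with commuting contractive shifts and $K\geq 3$ does give a uniformly invertible change-of-basis operator $U_i$ for $l(i)\geq 2$, and since only finitely many $i$ have $l(i)=1$, uniformity there is automatic once each such $U_i$ is shown invertible --- which, as you correctly flag, is the real content of R{\u a}dulescu's Riesz-basis estimate and still needs the direct Gram-matrix work. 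One slip: the factorization should read $S_{i,j}=U_j\circ T_{i,j}\circ U_i^{-1}$, not $U_j^{-1}\circ T_{i,j}\circ U_i$; the resulting norm bound is unaffected.

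The genuine gap is in your derivation of (1) and (2). These identities are not consequences of word-length counting in the Cayley tree together with the recursion $\omega_{n+1}=\omega_1\omega_n-K\omega_{n-1}$: they depend essentially on the defining orthogonality of the distinguished vectors $\xi^i$, namely that $\xi^i$ is orthogonal to $\omega_1\mathcal{K}_{l(i)-1}+\mathcal{K}_{l(i)-1}\omega_1$, equivalently $Q_{l(i)-1}(\omega_1\xi^i)=Q_{l(i)-1}(\xi^i\omega_1)=0$, which is what kills the lower-length components at every step of the induction. Your heuristic that ``a single application of $\omega_1$ can cancel at most one letter of $\xi^i$'' is a statement about individual group elements and is not sufficient: for $\xi=u_g$ with $|g|=l\geq 2$ one has $\omega_1 u_g = K^{1/2}\xi_{1,0}+u_{g'}$, where $g'$ is $g$ with its first letter deleted, so formula (1) already fails at $(n,m)=(1,0)$ for such a $\xi$. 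A correct induction must therefore start from the structural properties of R{\u a}dulescu's decomposition $\mathcal{H}\ominus L^2(C)=\bigoplus_{i\geq 1}\mathcal{H}_i$ and of how the generators $\xi^i$ are chosen; your proposal never states these, so the base case and the closing of the recursion are unproved as written.
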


\begin{lem}\label{lem: verifying new basis}
 $\left\{\eta^i_{n,m}:=\dfrac{\omega_n\xi^i\omega_m}{K^{(n+m)/2}}\right\}_{i\geq 1,n,m\geq 0}$ forms a Riesz basis for $L^2(M)\ominus L^2(C)$. 

Therefore, for any $x\in L^2(M)\ominus L^2(C)$, there is a unique decomposition $x=\sum_{i\geq 1, n,m\geq 0} b_{n,m}^i\eta^i_{n,m}$ where $\left(b^i_{n,m}\right)_{i\geq 1,n,m\geq 0}\in \ell^2$.

\end{lem}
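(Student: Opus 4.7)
The plan is to exhibit a bounded invertible operator $T$ on $L^2(M)\ominus L^2(C)$ sending the R\u{a}dulescu basis vector $\xi^i_{n,m}$ to $\eta^i_{n,m}$. Since $\{\xi^i_{n,m}\}_{i,n,m}$ is already a Riesz basis, such a $T$ automatically upgrades $\{\eta^i_{n,m}\}_{i,n,m}$ to a Riesz basis, and the unique $\ell^2$-decomposition of any vector in $L^2(M)\ominus L^2(C)$ then comes for free from Remark~\ref{rem: Riesz basis}. Because $L^2(M)\ominus L^2(C)=\bigoplus_{i\geq 1}\mathcal{H}_i$ is an orthogonal direct sum and $T$ should preserve each summand, the task reduces to producing, on each $\mathcal{H}_i$, a bounded invertible $T_i$ with $T_i(\xi^i_{n,m})=\eta^i_{n,m}$, with $\|T_i^{\pm 1}\|$ controlled uniformly in $i$.

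For indices $i$ with $l(i)\geq 2$, the family $\{\xi^i_{n,m}\}_{n,m}$ is orthonormal in $\mathcal{H}_i$. Dividing Lemma~\ref{lem: relations between different bases}(1) through by $K^{(n+m)/2}$, and adopting the convention that $\xi^i_{\cdot,\cdot}$ vanishes at negative indices, one reads off
$$\eta^i_{n,m}=\xi^i_{n,m}-K^{-1}\xi^i_{n,m-2}-K^{-1}\xi^i_{n-2,m}+K^{-2}\xi^i_{n-2,m-2}.$$
I would define commuting partial isometries $U_i,V_i$ on $\mathcal{H}_i$ by $U_i\xi^i_{n,m}=\xi^i_{n,m-2}$ and $V_i\xi^i_{n,m}=\xi^i_{n-2,m}$; in the orthonormal basis $\{\xi^i_{n,m}\}$ both have norm at most $1$. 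A direct check then gives the factorization
$$T_i=(I-K^{-1}U_i)(I-K^{-1}V_i),$$
and since $K\geq 3$, each factor is invertible via a Neumann series with inverse norm at most $(1-K^{-1})^{-1}$, producing a bound on $\|T_i^{\pm 1}\|$ that depends only on $K$.

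For the finitely many $i$ with $l(i)=1$, rather than expanding the infinite sum in Lemma~\ref{lem: relations between different bases}(2), I would fix any $j$ with $l(j)\geq 2$ and appeal to Lemma~\ref{lem: relations between different bases}(3): dividing that identity by $K^{(n+m)/2}$ gives $S_{i,j}(\eta^i_{n,m})=\eta^j_{n,m}$. Since $\{\eta^j_{n,m}\}$ has already been shown to be a Riesz basis of $\mathcal{H}_j$ in the previous step, and $\|S_{i,j}^{\pm 1}\|\leq C_2$, the preimage $\{\eta^i_{n,m}\}=S_{i,j}^{-1}(\{\eta^j_{n,m}\})$ is a Riesz basis of $\mathcal{H}_i$ with constants controlled uniformly by $C_2$ together with the bounds from the $l(i)\geq 2$ case. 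Assembling the $T_i$'s into $T=\bigoplus_i T_i$ finishes the argument.

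The only genuine computation is checking the factorization $T_i=(I-K^{-1}U_i)(I-K^{-1}V_i)$, which is made possible by the symmetric four-term shape of Lemma~\ref{lem: relations between different bases}(1). The apparently more painful $l(i)=1$ case, whose formula involves an infinite series with alternating signs, requires no extra calculation once one observes that the operators $S_{i,j}$ manifestly intertwine the $\eta$-bases, so the single hard computation for $l(j)\geq 2$ transports to all other summands.
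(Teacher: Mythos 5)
Your proposal is correct and follows essentially the same route as the paper: both reduce to a single summand with $l(i)\geq 2$ via the operators $S_{i,j}$ from Lemma~\ref{lem: relations between different bases}(3), and both exploit the four-term relation of part (1) with the smallness of $1/K$. Your factorization $T_i=(I-K^{-1}U_i)(I-K^{-1}V_i)$ with two Neumann series is just the operator-level form of the paper's grouping $\left(a_{n,m}-\tfrac{a_{n,m+2}}{K}\right)-\tfrac{1}{K}\left(a_{n+2,m}-\tfrac{a_{n+2,m+2}}{K}\right)$ followed by two applications of the triangle inequality, yielding the same lower bound $(1-1/K)^2$.
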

\begin{proof}
By (3) of the previous lemma, it suffices to prove the conclusion for some fixed $i\geq 1$ with $l(i)\geq 2$.

Fix $i\geq 1$ with $l(i)\geq 2$ and $(a_{n,m})_{n,m}\in \ell^2$. We will omit the superscript $i$, since no confusion will appear. Using part (1) of the previous lemma, we have 
\begin{equation*}
\begin{split}
\sum_{n,m\geq 0}a_{n,m}\eta_{n,m}&=\sum_{n,m\geq 0}a_{n,m}\left(\xi_{n,m}-\dfrac{\xi_{n,m-2}}{K}-\dfrac{\xi_{n-2,m}}{K}+\dfrac{\xi_{n-2,m-2}}{K^2}\right)\\
&=\sum_{n,m}\left(a_{n,m}-\dfrac{a_{n,m+2}}{K}-\dfrac{a_{n+2,m}}{K}+\dfrac{a_{n+2,m+2}}{K^2}\right)\xi_{n,m}\\
&=\sum_{n,m}\left(\left(a_{n,m}-\dfrac{a_{n,m+2}}{K}\right)-\dfrac{1}{K}\left(a_{n+2,m}-\dfrac{a_{n+2,m+2}}{K}\right)\right)\xi_{n,m},
\end{split}
\end{equation*}
hence by repeated use of the triangle inequality, we have 
\begin{equation*}
\begin{split}
\norm{\sum a_{n,m}\eta_{n,m}}_2&=\left(\sum_{n,m\geq 0} \left|\left(\left(a_{n,m}-\dfrac{a_{n,m+2}}{K}\right)-\dfrac{1}{K}\left(a_{n+2,m}-\dfrac{a_{n+2,m+2}}{K}\right)\right)\right|^2\right)^{1/2}\\
&\geq \left(\sum_{n,m\geq 0}\left|a_{n,m}-\dfrac{a_{n,m+2}}{K}\right|^2\right)^{1/2}-\dfrac{1}{K}\left(\sum_{n,m\geq 0}\left|a_{n+2,m}-\dfrac{a_{n+2,m+2}}{K}\right|^2\right)^{1/2}\\
&\geq \left(1-\dfrac{1}{K}\right)\left(\sum_{n,m\geq 0}\left|a_{n,m}-\dfrac{a_{n,m+2}}{K}\right|^2\right)^{1/2}\\
&\geq \left(1-\dfrac{1}{K}\right)^2\left(\sum_{n,m\geq 0}\left|a_{n,m}\right|^2\right)^{1/2}.
\end{split}
\end{equation*}
The other side of the inequality is easy, since each $a_{n,m}$ only appears at most four times. Thus there is a $B>0$, such that 
\[\norm{\sum a_{n,m}\eta_{n,m}}_2^2\leq B\sum\left|a_{n,m}\right|^2,\]
 So we are done.
\end{proof}

\begin{rem}
Because both $\|T_{i,j}^{\pm 1}\|$ and $\|S_{i,j}^{\pm 1}\|$ are uniformly bounded, there is a $C_0>0$ such that $\|T_{i,j}^{\pm 1}\|\leq C_0,\|S_{i,j}^{\pm 1}\|\leq C_0$, and for any $\left(c^i_{n,m}\right)\in \ell^2$, 
\begin{equation*}
\begin{split}
\dfrac{1}{C_0}\sum_{i,n,m}\left|c^i_{n,m}\right|^2\leq \|\sum_{n,m\geq 0,i\geq 1}c^i_{n,m}\xi^i_{n,m}\|^2_2\leq C_0\sum_{i,n,m}\left|c^i_{n,m}\right|^2,\\
\dfrac{1}{C_0}\sum_{i,n,m}\left|c^i_{n,m}\right|^2\leq \|\sum_{n,m\geq 0,i\geq 1}c^i_{n,m}\eta^i_{n,m}\|^2_2\leq C_0\sum_{i,n,m}\left|c^i_{n,m}\right|^2
\end{split}
\end{equation*}
\end{rem}

For each $k\in \mathbb{N}$, define $L_k, L_k': L^2(M)\ominus L^2(C)\rightarrow L^2(M)\ominus L^2(C)$ as follows
\begin{equation*}
\begin{split}
L_k\left(\sum_{i\geq 1,n,m\geq 0}a^i_{n,m}\xi^i_{n,m}\right):=\sum_{i\geq 1,n\leq k,m\geq 0}a^i_{n,m}\xi^i_{n,m},\\
L_k'\left(\sum_{i\geq 1,n,m\geq 0}b^i_{n,m}\eta^i_{n,m}\right):=\sum_{i\geq 1,n\leq k,m\geq 0}b^i_{n,m}\eta^i_{n,m},
\end{split}
\end{equation*}

i.e. $L_k$ (resp. $L_k'$) is the left ``projection'' onto the span of $\left\{\xi^i_{n,m}\right\}_{i,n,m}$ (resp.\ $\left\{\eta^i_{n,m}\right\}_{i,n,m}$) with the first subscript no larger than $k$. However one should be warned that they are merely idempotents, instead of projections, due to the presence of those $i$'s with $l(i)=1$. We can also define $R_k, R'_{k}$ for the right ``projections'' in the similar fashion. All these idempotents are bounded operators.
Let $L_{k}\vee R_{k}:=L_k+R_k-L_kR_k, L_{k}'\vee R_{k}':=L_k'+R_k'-L_k'R_k'$ .

\begin{lem}
$L_k'$ is right $C$-modular, $\forall k\geq 0.$ 
\end{lem}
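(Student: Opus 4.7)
The plan is to reduce right $C$-modularity of $L_k'$ to right modularity over the generators $\omega_s$ of $C$, and then to exploit the very reason the basis $\{\eta^i_{n,m}\}$ was introduced in place of the R{\u a}dulescu basis: right multiplication by any $\omega_s$ preserves the left index $n$.

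First, since $L_k'$ is bounded, right multiplication by elements of $C$ is bounded on $L^2(M)$, and polynomials in $\omega_1$ are strong-dense in $C$, it suffices to verify $L_k'(x\omega_s)=L_k'(x)\omega_s$ for every $x\in L^2(M)\ominus L^2(C)$ and every $s\geq 0$ (noting that $x\omega_s$ again lies in $L^2(M)\ominus L^2(C)$ since $\omega_s\in C$). Second, I would compute the action of right multiplication by $\omega_s$ on a single basis vector $\eta^i_{n,m}$. Using the standard finite expansion $\omega_m\omega_s=\sum_t \alpha_{m,s,t}\omega_t$ inside $C$ (a consequence of the Chebyshev-like recurrence for the $\omega_t$'s), a direct calculation yields
\begin{equation*}
\eta^i_{n,m}\omega_s=\frac{\omega_n\xi^i(\omega_m\omega_s)}{K^{(n+m)/2}}=\sum_{t\geq 0}\alpha_{m,s,t}K^{(t-m)/2}\eta^i_{n,t}.
\end{equation*}
The crucial feature is that the left index $n$ is unchanged and the sum in $t$ is finite.

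Third, I would extend this to an arbitrary $x=\sum_{i\geq 1,n,m\geq 0}b^i_{n,m}\eta^i_{n,m}$ by linearity and boundedness of $y\mapsto y\omega_s$, producing a Riesz expansion $x\omega_s=\sum_{i\geq 1,n,t\geq 0}c^i_{n,t}\eta^i_{n,t}$ whose coefficients $c^i_{n,t}$ depend only on $(b^i_{n,m})_{m\geq 0}$ for the same fixed pair $(i,n)$. The \emph{uniqueness} of the Riesz decomposition (Remark~\ref{rem: Riesz basis}) then immediately gives
\begin{equation*}
L_k'(x\omega_s)=\sum_{i\geq 1,\,n\leq k,\,t\geq 0}c^i_{n,t}\eta^i_{n,t}=L_k'(x)\omega_s,
\end{equation*}
the last equality being obtained by expanding $L_k'(x)\omega_s$ via the same basis computation.

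The only subtle point I anticipate is the usual care needed in manipulating $\ell^2$-indexed infinite sums (interchanging $\sum_m$ with $\sum_t$ and passing to the closed Riesz span), but this follows routinely from the boundedness of $x\mapsto x\omega_s$ and the finite support of $\alpha_{m,s,\cdot}$. Notice that the analogous identity for $L_k$, the idempotent defined through the R{\u a}dulescu basis $\{\xi^i_{n,m}\}$, would fail because right multiplication by $\omega_s$ mixes the left index of $\xi^i_{n,m}$ through the identities of Lemma~\ref{lem: relations between different bases}; this is precisely the reason one passes from $\xi$'s to $\eta$'s.
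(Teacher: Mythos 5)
Your proof is correct and is essentially the paper's own argument: both reduce to checking $L_k'(\eta^i_{n,m}\omega_s)=L_k'(\eta^i_{n,m})\omega_s$ on basis vectors and rest on the observation that $\eta^i_{n,m}\omega_s$ lies in $\mathrm{span}\{\eta^i_{n,t}\}_{t\geq 0}$, so right multiplication by $\omega_s$ leaves the left index $n$ untouched. Your explicit expansion $\eta^i_{n,m}\omega_s=\sum_t\alpha_{m,s,t}K^{(t-m)/2}\eta^i_{n,t}$ and the appeal to uniqueness of the Riesz decomposition simply make precise what the paper asserts in one line.
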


\begin{proof}
Since $\{\omega_n\}_{n\geq 0}$ forms an orthogonal basis for $L^2(C)$ and $\{\eta_{n,m}^i\}_{i\geq 1, n,m\geq 0}$ is a Riesz basis for $L^2(M)\ominus L^2(C)$, it is sufficient to show that  $L_k'(\eta^i_{n,m}\omega_l)=L_k'(\eta^i_{n,m})\omega_l$.

 The definition of the $\eta_{n,m}^i$'s clearly implies that $\eta^i_{n,m}\omega_l\in$ span$\{\eta_{n,k}^i\}_{k\geq 0}$, that is, multiplying $\omega_l$ on the right does not change neither the upper nor left index of $\eta_{n,m}^i$, thus $L_k'(\eta^i_{n,m}\omega_l)=L_k'(\eta^i_{n,m})\omega_l$ and the proof is complete.
\end{proof}

We will need the following result from \cite{CFRW2010radialmasa}:

\begin{lem}[Lemma 4.3, Theorem 6.2 in \cite{CFRW2010radialmasa}]\label{lem: combinatorics from CFRW}
Given $(x_k)_k\in M^{\omega}\ominus C^{\omega}$, if for every $k_0\in \mathbb{N}$, we have that $\lim_{k\rightarrow\omega}\norm{(L_{k_0}\vee R_{k_0}) \left(x_k\right)}_2=0$, then for any $y_1,y_2\in L^2(M)\ominus L^2(C)$, $y_1(x_k)_k\perp (x_k)_ky_2.$

\end{lem}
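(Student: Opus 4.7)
The plan is to show that $\langle y_1 (x_k)_k, (x_k)_k y_2 \rangle = \lim_{k\to\omega} \tau(y_2^* x_k^* y_1^* x_k) = 0$ for all $y_1, y_2 \in L^2(M) \ominus L^2(C)$, via an explicit analysis in the R{\u a}dulescu basis. By sesquilinearity and norm-density, I first reduce to $y_1 = u_g - E_C(u_g)$ and $y_2 = u_h - E_C(u_h)$ for group elements $g, h \in \Gamma$ of fixed word lengths $p = |g|$ and $q = |h|$; call these $y_1, y_2$ for brevity.

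Next, I would use the hypothesis to localize $x_k$. Fix $k_0$ (to be optimized later) and split $x_k = x_k' + x_k''$ with $x_k' = (L_{k_0} \vee R_{k_0}) x_k$. Since the idempotents $L_{k_0}, R_{k_0}$ are uniformly bounded and $y_1, y_2$ have finite operator norm, the hypothesis $\|x_k'\|_2 \to 0$ along $\omega$ forces the $x_k'$ contributions to $\langle y_1 x_k, x_k y_2 \rangle$ to vanish. Expand the remainder as $x_k'' = \sum_{i \ge 1,\, n, m > k_0} a^{i,k}_{n,m} \xi^i_{n,m}$ in the R{\u a}dulescu basis; $x_k''$ is concentrated in the ``deep'' region where both the left and right indices exceed $k_0$.

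The combinatorial core is then to estimate the pairings $\langle u_g \xi^i_{n,m},\, \xi^{i'}_{n',m'} u_h \rangle$. Expanding each $\xi^i_{n,m}$ as a linear combination of group elements of length $l(i) + n + m$ via its definition, and using the identities in Lemma~\ref{lem: relations between different bases} together with R{\u a}dulescu's underlying computations, one evaluates the trace by tracking which group elements of length $0$ (i.e.\ multiples of the identity) survive the matching. For $k_0 \gg p + q$, the geometry of the free group forces any nonvanishing matching to impose rigid relations linking $(n, m, i)$ and $(n', m', i')$, and each such relation costs a factor of $K^{-1/2}$ coming from the normalization. Putting this together with Cauchy--Schwarz and the Riesz-basis bounds of the preceding remark yields an estimate
\[
|\langle y_1 x_k'', x_k'' y_2 \rangle| \le \epsilon(k_0) \, \|x_k''\|_2^2
\]
with $\epsilon(k_0) \to 0$; taking $k \to \omega$ and then $k_0 \to \infty$ finishes the proof.

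The main obstacle I anticipate is precisely this combinatorial step: carrying out the R{\u a}dulescu-basis pairing cleanly enough to extract genuine decay in $k_0$. This is where the structure of the radial masa does all the work. In Popa's original generator-masa argument the basis is the canonical basis of $\ell^2(\mathbb{F}_N)$ and the analogous orthogonality is nearly immediate from reduced-word combinatorics; here the presence of the lower-order correction terms $\xi^i_{n-2, m}$, $\xi^i_{n, m-2}$, $\xi^i_{n-2, m-2}$ in Lemma~\ref{lem: relations between different bases}, and the extra complication for $l(i) = 1$, force a more delicate bookkeeping.
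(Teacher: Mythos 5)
Your outline reproduces the overall strategy correctly: reduce by bilinearity and density to $y_1=u_g-E_C(u_g)$, $y_2=u_h-E_C(u_h)$; use the hypothesis to discard $(L_{k_0}\vee R_{k_0})(x_k)$ (legitimate, since these idempotents are uniformly bounded and $y_1,y_2$ have finite operator norm); and then argue that the remaining part of $x_k$, supported on basis vectors $\xi^i_{n,m}$ with $n,m>k_0$, yields $y_1x_k''$ and $x_k''y_2$ that are almost orthogonal once $k_0\gg |g|+|h|$. This is indeed how the cited result is proved. But the paper does not prove this lemma at all --- it imports it verbatim from \cite{CFRW2010radialmasa} (Lemma 4.3 and Theorem 6.2 there) --- and the step you yourself flag as ``the main obstacle'' is precisely the entire content of that citation. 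As written, your argument asserts the key estimate $|\langle y_1x_k'',x_k''y_2\rangle|\le\epsilon(k_0)\|x_k''\|_2^2$ rather than proving it, so there is a genuine gap.

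Concretely, what is missing is the analogue of \cite[Lemma 4.3]{CFRW2010radialmasa}: an explicit computation showing that for a word $g$ with $|g|=p$ and for $n\ge p$, the product $u_g\,\xi^i_{n,m}$ decomposes as a uniformly bounded combination of basis vectors $\xi^{j}_{n',m}$ whose \emph{right} index $m$ is unchanged and whose left index $n'$ is within $p$ of $n$, plus error terms carrying powers of $K^{-1}$; symmetrically for $\xi^{i'}_{n',m'}u_h$. It is only after this structural statement that one can separate leading terms (which land in disjointly supported regions of the Riesz basis once $k_0>|g|+|h|$, giving exact orthogonality there) from sub-leading terms (which are geometrically small in $K^{-1}$ and must be summed). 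Your heuristic that ``each relation costs a factor of $K^{-1/2}$'' conflates these two mechanisms: the decay in $k_0$ does not come from a per-matching penalty but from the combination of support disjointness of the main terms and geometric smallness of the corrections, and for the finitely many $i$ with $l(i)=1$ the infinite tails in Lemma~\ref{lem: relations between different bases}(2) require separate treatment. Without carrying out this computation the proof is an outline of the CFRW argument rather than a proof.
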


Now we state the key technical result of this paper.

\begin{prop}\label{prop: s-AOP radial masa}
Let $\Gamma=\mathbb{F}_{N+1}$ be a non-abelian free group with finitely many generators and $M=L(\Gamma)$ the corresponding group von Neumann algebra. Denote by $C$ the radial masa of $M$ and suppose that $B\subset C$ is a diffuse von Neumann subalgebra. Then for any $(x_k)_k\in B'\bigcap M^{\omega}\ominus C^{\omega}$ and $y_1,y_2\in M\ominus C$, where $\omega$ is a free ultrafilter, we have that $y_1(x_k)_k\perp (x_k)_ky_2$ in $L^2(M^{\omega})$.
\end{prop}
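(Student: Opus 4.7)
Proof proposal. By Lemma~\ref{lem: combinatorics from CFRW} it suffices to show that $\lim_{k\to\omega}\|(L_{k_0}\vee R_{k_0})(x_k)\|_2=0$ for every $k_0$. Since the $\xi$- and $\eta$-bases are related by the uniformly bounded invertible operators $T_{i,j}$ and $S_{i,j}$ of Lemma~\ref{lem: relations between different bases}, this reduces, up to a universal constant, to showing $\|L_{k_0}'(x_k)\|_2\to 0$ and $\|R_{k_0}'(x_k)\|_2\to 0$ along $\omega$ for the $\eta$-basis idempotents $L_{k_0}'$ and $R_{k_0}'$; by symmetry we focus on $L_{k_0}'$. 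After subtracting $E_{C^\omega}(x_k)$ (whose $L^2$-norm vanishes along $\omega$ since $(x_k)_k\in M^\omega\ominus C^\omega$), we may assume that each $x_k$ lies in $L^2(M)\ominus L^2(C)$ and expand $x_k=\sum_{i,n,m}b^i_{n,m}(k)\,\eta^i_{n,m}$, with $\sum_{i,n,m}|b^i_{n,m}(k)|^2$ uniformly bounded in $k$ by the remark following Lemma~\ref{lem: verifying new basis}.

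Diffuseness of $B$ enters through the choice of a Haar unitary $v\in B$. Because $v\in C$, both left and right multiplication by $v$ on the $\eta$-basis shift exactly one index and are governed by the \emph{same} matrix $\Gamma=(\gamma_{n,j})$: writing the expansion $v\omega_n=\sum_j\delta_{n,j}\omega_j$ inside $L^2(C)$ and setting $\gamma_{n,j}:=\delta_{n,j}K^{(j-n)/2}$, one has $v\eta^i_{n,m}=\sum_j\gamma_{n,j}\,\eta^i_{j,m}$ and $\eta^i_{n,m}v=\sum_j\gamma_{m,j}\,\eta^i_{n,j}$. The asymptotic commutation $\|vx_k-x_kv\|_2\to 0$ along $\omega$ therefore becomes an approximate intertwining relation
\[
\sum_{n'}\gamma_{n',n}\,b^i_{n',m}(k)\;-\;\sum_{m'}\gamma_{m',m}\,b^i_{n,m'}(k)\;\longrightarrow\;0\text{ along }\omega
\]
for the coefficient matrices $B^i(k):=(b^i_{n,m}(k))_{n,m}$, with uniform Hilbert--Schmidt control on the family $\{B^i(k)\}_{i,k}$.

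The main technical step is to deduce from this approximate intertwining that $\|L_{k_0}'(x_k)\|_2\to 0$ along $\omega$. The matrix $\Gamma$ implements multiplication by the Haar unitary $v$ on $L^2(C)$ (after a bounded diagonal similarity coming from the weights $K^{(\cdot)/2}$), and so is unitarily conjugate to a unitary with Lebesgue spectrum on the circle. An approximately intertwining, uniformly Hilbert--Schmidt sequence cannot keep concentrated mass in a fixed finite corner when the intertwining operator has no eigenvectors: iterating the relation under conjugation by $v^\ell$ preserves the Hilbert--Schmidt norm while, by mixing of $v$, dispersing the first $k_0{+}1$ rows across infinitely many indices as $\ell\to\infty$. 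Reconciling this dispersal with the approximate intertwining, summed over $i$, forces $L_{k_0}'(x_k)\to 0$ in $L^2$-norm along $\omega$, and then Lemma~\ref{lem: combinatorics from CFRW} delivers the asymptotic orthogonality $y_1(x_k)_k\perp(x_k)_ky_2$.

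The main obstacle is precisely this spectral/mixing argument. In the Cameron--Fang--Ravichandran--White proof of maximal amenability \cite{CFRW2010radialmasa} one has commutation with all of $C$, and in particular with $\omega_1$, whose matrix on the $\xi$-basis is tridiagonal and produces an explicit three-term recurrence whose $\ell^2$ solutions can be controlled near the boundary by direct computation. Here $\omega_1$ need not lie in $B$, so we must replace that recurrence by an abstract spectral/mixing argument using only a generic diffuse element of $B$, and this must be made uniform over the block index $i$ and robust to the diagonal-scaling non-symmetry of $\Gamma$. The new structural ingredient that makes this feasible is the right-$C$-modularity of the $\eta$-basis, through which both actions of $v\in C$ reduce to a single infinite matrix.
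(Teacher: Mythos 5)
Your skeleton matches the paper's: reduce via Lemma~\ref{lem: combinatorics from CFRW} to showing $\lim_{k\to\omega}\|L_{k_0}(x_k)\|_2=\lim_{k\to\omega}\|R_{k_0}(x_k)\|_2=0$, pass to the $\eta$-basis idempotents $L'_{k_0}$, and exploit right $C$-modularity together with asymptotic commutation with $B$. Your observation that the $\eta$-basis is simultaneously left and right modular over $C$, with both actions governed by the single matrix $\gamma_{n,j}=\delta_{n,j}K^{(j-n)/2}$, is correct and is indeed the structural point the argument is built on.

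The gap is the step you yourself flag as ``the main obstacle'': deducing $\|L'_{k_0}(x_k)\|_2\to 0$ from the approximate intertwining relation. As written this is an appeal to an unproved principle (an approximately intertwining, uniformly Hilbert--Schmidt family cannot concentrate in a fixed finite corner when the intertwiner has no eigenvectors), and the sketch offered --- conjugate by $v^\ell$ and invoke mixing --- does not close. The difficulty is quantitative: right modularity gives $\|L'_{k_0}(x_kv^\ell)\|_2=\|L'_{k_0}(x_k)\|_2$ and commutation gives $\|L'_{k_0}(v^\ell x_k)\|_2\approx\|L'_{k_0}(x_k)\|_2$ for each $\ell$, but to conclude you need $\sum_{\ell=1}^{L}\|L'_{k_0}(v^\ell x_k)\|_2^2\leq \mathrm{const}$ \emph{uniformly in} $L$, and weak mixing of $v$ (i.e.\ $\langle v^\ell\omega_n,\omega_j\rangle\to 0$ for fixed $n,j$) does not give this: the $\omega_n$-supports of the $v^\ell$ can overlap heavily, so the pieces $L'_{k_0}(v^\ell x_k)$ need not capture almost-disjoint portions of $x_k$. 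The paper closes exactly this hole by a construction rather than a spectral argument (Lemma~\ref{lem: approx unitaries 2}): it takes a weakly null sequence of unitaries in $B$, perturbs them to elements $v_i$ with finite, mutually disjoint, well-separated $\omega_n$-supports, so that $L'_{N_0}(v_ix)=L'_{N_0}\bigl(v_i(L'_{F_{i+1}}-L'_{F_i})(x)\bigr)$; then $\sum_{i=N_1}^{N_2}\|L'_{N_0}(v_ix_k)\|_2^2\leq 4\|L'_{N_0}\|^2C_0^2$ uniformly, while each summand is $\approx\|L'_{N_0}(x_k)\|_2^2$, and dividing by $N_2-N_1$ finishes. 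To salvage your route you would have to establish this almost-disjointness for the powers $v^\ell$ (false in general) or reinsert the perturbation step, at which point you have the paper's proof. A secondary, fixable omission: the passage from $L'_{k_0}$ back to $L_{k_0}$ is not merely ``up to a universal constant''; for the finitely many blocks with $l(i)=1$ the change of basis has an infinite geometric tail (Lemma~\ref{lem: relations between different bases}(2)) that must be estimated separately, as in the paper's Lemma~\ref{lem: step2}.
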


We will break the proof into several lemmas.

Let $(x_k)_k\in B'\bigcap M^{\omega}\ominus C^{\omega}$ and $y_1,y_2\in M\ominus C$ be given. For each $k$, we can assume $x_k\in M\ominus C\subset L^2(M)\ominus L^2(C), ||x_k||\leq 1$ and write its decompositions with respect to $\left\{\xi^i_{n,m}\right\}_{i\geq 1,n,m\geq 0}$ and $\left\{\eta^i_{n,m}\right\}_{i\geq 1,n,m\geq 0}$, respectively:
\[x_k=\sum_{i\geq 1,n,m\geq 0}a^{i,k}_{n,m}\xi^i_{n,m}=\sum_{i\geq 1,n,m\geq 0}b^{i,k}_{n,m}\eta^i_{n,m}, \]
where both $\left(a^{i,k}_{n,m}\right)_{i\geq 1,n,m\geq 0}$ and $\left(b^{i,k}_{n,m}\right)_{i\geq 1,n,m\geq 0}$ are from $\ell^2$.

 Since $B$ is diffuse, we can choose a sequence $\{u_k\}_k$ in the unitary group of $B$, which converges to 0 weakly. Recall that $\left\{\dfrac{\omega_i}{||\omega_i||_2}\right\}_{i\geq 0}$ is an orthonormal basis for $L^2(C)$. Moreover, for any fixed $N_0\geq 0$, $\omega_n \omega_m$ will be supported on those $\omega_i$'s with $i>N_0$, provided that $|m-n|> N_0$. We first need to approximate each $u_k$ using finite linear combinations of $\omega_i$'s.

\begin{lem}\label{lem: approx unitaries 2}

 For each fixed $N_0$, there exists a sequence $\{S_k\}_{k\geq 1}$ of non-empty, disjoint, finite subsets of $\mathbb{N}\cup \{0\}$ and a sequence of strictly increasing natural numbers $\{n_k\}_{k\geq 1}$, such that in the decomposition with respect to $\{\omega_i\}_{i\geq 0}$, the supports of elements from $\{\omega_m\omega_n:m\in S_i, n\leq N_0 \}$ and the supports of elements from $\{\omega_m\omega_n:m\in S_j, n\leq N_0 \}$ are disjoint, whenever $i,j\geq 1, i\neq j$. Moreover, there exists a sequence  $\{v_k\}_k$ in $C$, with $v_k\in span\{\omega_i:i\in S_k\}$ such that $||v_k||\leq 2$ and $||v_k-u_{n_k}||_2\leq \dfrac{1}{2^k}$. 
 
 Moreover, one can construct $\{v_k\},\{S_k\}$ such that there is a sequence $\{F_k\}$ of strictly increasing natural numbers such that $L'_{N_0}\left(v_i x\right)=L'_{N_0}\left(v_i\left(L'_{F_{i+1}}-L'_{F_i}\right)(x)\right)$, for all $x\in L^2(M)\ominus L^2(C)$.

\end{lem}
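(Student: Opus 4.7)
The plan is to construct the sequences $\{n_k\}$, $\{S_k\}$, $\{v_k\}$, $\{F_k\}$ simultaneously by induction on $k$, starting from $F_1=0$. The tools I will use are: (i) the weak convergence $u_k\to 0$, which makes $\|P_{\le F}u_{n_k}\|_2\to 0$ as $k\to\infty$ for every fixed cutoff $F$, where $P_{\le F}$ denotes orthogonal projection in $L^2(C)$ onto $\operatorname{span}\{\omega_0,\dots,\omega_F\}$; (ii) Kaplansky density, giving polynomials in $\omega_1$ that approximate any $u\in C$ in $\|\cdot\|_2$ without increasing its operator norm; (iii) the free-group length formula, so $\omega_m\omega_n$ has Fourier support in $\{\omega_q : |m-n|\le q\le m+n\}$; and (iv) finite-dimensional norm equivalence on $\operatorname{span}\{\omega_0,\dots,\omega_F\}$, with some constant $\kappa(F)$.

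At stage $k$, given $F_k$, first use (i) to pick $n_k>n_{k-1}$ with $\|P_{\le F_k+N_0}u_{n_k}\|_2<\varepsilon_k$ for an $\varepsilon_k$ so small that $\kappa(F_k+N_0)\varepsilon_k<1/4$ and $\varepsilon_k<2^{-k-1}$. Then by (ii) choose a polynomial $p_k(\omega_1)=\sum_{i=0}^{d_k}\beta_i^k\omega_i$ with $\|p_k(\omega_1)\|\le 1$ and $\|p_k(\omega_1)-u_{n_k}\|_2<\varepsilon_k$. Define
\[
v_k := p_k(\omega_1)-P_{\le F_k+N_0}p_k(\omega_1),
\]
let $S_k\subset\{F_k+N_0<i\le d_k\}$ be the Fourier support of $v_k$, and set $F_{k+1}:=d_k+N_0+1$. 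A triangle inequality yields $\|v_k-u_{n_k}\|_2\le\|p_k(\omega_1)-u_{n_k}\|_2+\|P_{\le F_k+N_0}u_{n_k}\|_2+\|p_k(\omega_1)-u_{n_k}\|_2<2^{-k}$. The operator bound uses (iv): since $\|P_{\le F_k+N_0}p_k(\omega_1)\|\le \kappa(F_k+N_0)\bigl(\|P_{\le F_k+N_0}u_{n_k}\|_2+\|p_k(\omega_1)-u_{n_k}\|_2\bigr)<\tfrac12$, we get $\|v_k\|\le 1+\tfrac12<2$.

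For disjointness of the supports of $\omega_m\omega_n$ for $m\in S_i$ and $n\le N_0$, apply (iii): these supports lie in $\{\omega_q : m-N_0\le q\le m+N_0\}$, hence in $(F_i,F_{i+1})$ by construction, and the intervals $(F_i,F_{i+1})$ are pairwise disjoint across $i$. For the final \emph{moreover} clause, expand
\[
v_i\,\eta^j_{p,q}=\sum_{m\in S_i}\beta_m^i\,\frac{\omega_m\omega_p\,\xi^j\,\omega_q}{K^{(p+q)/2}}
=\sum_{m\in S_i}\sum_{r}\beta_m^i\,c_{m,p,r}\,K^{(r-p)/2}\,\eta^j_{r,q},
\]
using $\omega_m\omega_p=\sum_r c_{m,p,r}\omega_r$. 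Applying $L'_{N_0}$ retains only $r\le N_0$; combined with $m\in(F_i+N_0,F_{i+1}-N_0)$ and $|m-p|\le r\le N_0$, this forces $F_i<p<F_{i+1}$, exactly the range preserved by $L'_{F_{i+1}}-L'_{F_i}$. Hence $L'_{N_0}(v_ix)=L'_{N_0}\bigl(v_i(L'_{F_{i+1}}-L'_{F_i})(x)\bigr)$.

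The main obstacle is enforcing the operator-norm bound $\|v_k\|\le 2$: Kaplansky alone gives only $\|\cdot\|_2$-control, and truncating a bounded masa element to prescribed Fourier indices need not preserve boundedness. The reconciling point is that the discarded low-frequency part lives in a finite-dimensional subspace where $\|\cdot\|_\infty\le\kappa(F)\|\cdot\|_2$; the constant $\kappa(F_k+N_0)$ may be huge, but at stage $k$ we are free to pick $n_k$ (and hence $\varepsilon_k$) as small as we need relative to it, so the obstacle is pushed off to the $n_k$-choice, where weak convergence cooperates.
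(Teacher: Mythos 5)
Your proposal is correct and follows essentially the same route as the paper: approximate $u_{n_k}$ by a finitely supported element via Kaplansky density, discard the low-frequency part (made small by choosing $n_k$ via weak convergence, with operator-norm control of the discarded piece coming from its smallness in a fixed finite-dimensional subspace, where the paper uses an $\ell^1$-coefficient estimate and you use norm equivalence), and separate the supports by at least $N_0$ to get both the disjointness and the final modularity clause. The only quibble is arithmetic: your triangle inequality gives $3\varepsilon_k$, so you need $\varepsilon_k<2^{-k}/3$ rather than $2^{-k-1}$ to conclude $\|v_k-u_{n_k}\|_2\leq 2^{-k}$.
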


\begin{proof}
Throughout this lemma, for any $x\in C$, we always consider the Fourier expansion of $x$ with respect to $\{\omega_i\}_{i\geq 0}$. Moreover, if $x=\sum_{i\geq 0}a_i\omega_i$ and $F\subset \mathbb{N}\cup\{0\}$, we will use the notation $P_F(x):=\sum_{i\in F}a_i\omega_i$. 

We construct $\{S_k\}$, $\{n_k\}$ and $\{v_k\}$ inductively. Since $span\{\omega_n\}_{n\geq 0}$ is a weakly dense *-subalgebra of $C$, Kaplansky Density Theorem implies that there exists a sequence $\{z_k\}_k$ of elements  in $C$, whose Fourier expansions are finitely supported, such that $||z_k||\leq 3/2$ and $||u_k-z_k||_2\leq \dfrac{1}{4^k}$. For each $k$, suppose that $z_k$ is supported on $\{\omega_i\}_{i\in T_k}$, where $T_k \subset \mathbb{N}\cup\{0\}$ is some finite subset. Let $n_1=1$, $v_1=z_1$ and $S_1=T_1$. Then $||v_1||\leq 2$ and $||v_1-u_{n_1}||_2\leq 1/2$ and $v_1\in span\{\omega_i:i\in S_1\}$.  

Now suppose that $S_1,\cdots,S_k$ and $n_1,\cdots,n_k$ have already been chosen. Then there exists a finite subset  $F_{k+1}\subset \mathbb{N}\cup \{0\}$, such that $\bigcup_{1\leq i\leq k}S_i\subset F_{k+1}$ and for any $S\subset \mathbb{N}\cup \{0\}\backslash F_{k+1}$, we always have that in the decomposition with respect to $\{\omega_i\}_{i\geq 0}$, the supports of elements from $\{\omega_m\omega_n:m\in \cup_{1\leq i\leq k}S_i, n\leq N_0 \}$ and the supports of elements from $\{\omega_m\omega_n:m\in S, n\leq N_0 \}$ are disjoint (for example, one can let $F_{k+1}=\{0,1,\cdots, max\cup_{1\leq i\leq k}S_i+3N_0 \}$). Now since $u_k\rightarrow 0$ weakly, there is a natural number $n_{k+1}>n_k$, such that with respect to the basis $\{\omega_i\}_{i\geq 0}$, the Fourier coefficient of $z_{n_{k+1}}$ has absolute value less than $\dfrac{1}{4^k|F_{k+1}|||\omega_i||}$, for each $0\leq i\leq F_{k+1}$. Let $S_{k+1}:= T_{n_{k+1}} \backslash F_{k+1}, v_{k+1}:=P_{S_{k+1}}(z_{n_{k+1}})$. Then 
\begin{align*}
\|v_{k+1}-u_{n_{k+1}}\|_2&\leq \norm{v_{k+1}-z_{n_{k+1}}}_2+\norm{z_{n_{k+1}}-u_{n_{k+1}}}_2\\
&=\|P_{(T_{n_{k+1}}\backslash S_{k+1})}(z_{n_{k+1}})\|_2+\norm{z_{n_{k+1}}-u_{n_{k+1}}}_2\\
&= \norm{P_{F_{k+1}}(z_{n_{k+1}})}_2+\norm{z_{n_{k+1}}-u_{n_{k+1}}}_2\\
&\leq \left(\dfrac{\left|F_{k+1}\right|}{2^{2k}\left|F_{k+1}\right|}\right)^{1/2}+\dfrac{1}{4^{k+1}}\\
&\leq \dfrac{1}{2^{k+1}},
\end{align*}
and an easy estimate of the $\ell^1$-norm gives us 
\begin{align*}
\norm{v_{k+1}}\leq \norm{z_{k+1}}+\dfrac{|F_{k+1}|}{2^k|F_{k+1}|}\leq 2.
\end{align*}
The last statement can be achieved by letting the supports of $\{v_k\}_k$ mutually far away. For example, choose the gap between $S_i$ and $S_j$ to be greater than $3N_0$ and let $F_k$ be the collection of elements of $\mathbb{N}\cup\{0\}$ between $\min_{n\in S_k}|n|-N_0$ and $\max_{n\in S_k} |n|+N_0$. 
\end{proof}

Thus by taking a subsequence if necessary, we may assume that $\{v_k\}$ is a sequence in $C$, such that $v_k\in span\{\omega_i:i\in S_k\}$ for some finite subset $S_k\subset \mathbb{N}$, $||v_k||\leq 2$, $||v_k-u_{k}||_2\leq \dfrac{1}{2^k}$ and $v_i\omega_k\perp v_j\omega_k$, for all $i,j\geq 1, i\neq j$ and all $0\leq k\leq N_0$ and there is a sequence $\{F_k\}$ of strictly increasing natural numbers such that $L'_{N_0}\left(v_i x\right)=L'_{N_0}\left(v_i\left(L'_{F_{i+1}}-L'_{F_i}\right)(x)\right)$, for all $x\in L^2(M)\ominus L^2(C)$.
 
\begin{lem}\label{lem: step1}
 $\lim_{k\rightarrow\omega}\norm{L'_{N_0}({x_k})}_2= 0$.
\end{lem}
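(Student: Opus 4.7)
The strategy is to exploit, for each \emph{separately fixed} parameter $k'$, the approximate commutation of $x_k$ with the fixed element $v_{k'}$ (along $\omega$), together with the right $C$-modularity of $L'_{N_0}$ and the locality identity $L'_{N_0}(v_{k'} x_k) = L'_{N_0}(v_{k'}(L'_{F_{k'+1}}-L'_{F_{k'}})(x_k))$ guaranteed by Lemma~\ref{lem: approx unitaries 2}. This will reduce the desired estimate to controlling one specific strip of $x_k$ in its Riesz basis decomposition, and that strip can be forced to vanish by letting $k' \to \infty$ independently of $k \to \omega$. The crucial design choice is to keep the indices $k$ and $k'$ decoupled rather than diagonalizing them, which avoids any delicate diagonal argument on commutators.

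\smallskip

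Fix $k'$ (large). Since $u_{k'}\in B$ is unitary and $(x_k)_k\in B'\cap M^{\omega}$, we have $\omega\text{-}\lim_k\|u_{k'}x_k-x_ku_{k'}\|_2=0$. Combined with $\|v_{k'}-u_{k'}\|_2\leq 2^{-k'}$ and $\|x_k\|\leq 1$, this gives $\omega\text{-}\lim_k\|v_{k'}x_k-x_kv_{k'}\|_2\leq 2\cdot 2^{-k'}$. Applying the bounded operator $L'_{N_0}$, using the right $C$-modularity $L'_{N_0}(x_kv_{k'})=L'_{N_0}(x_k)v_{k'}$, and using the identity $L'_{N_0}(x_k(v_{k'}-u_{k'}))=L'_{N_0}(x_k)(v_{k'}-u_{k'})$ together with the unitarity of $u_{k'}$ (which gives $\|L'_{N_0}(x_k)u_{k'}\|_2=\|L'_{N_0}(x_k)\|_2$), a short chain of triangle inequalities yields
\[
\omega\text{-}\lim_k\|L'_{N_0}(x_k)\|_2 \;\leq\; \omega\text{-}\lim_k\|L'_{N_0}(v_{k'}x_k)\|_2 \;+\; O(2^{-k'}),
\]
the implicit constants depending only on $\|L'_{N_0}\|$. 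By the locality property, writing $P_{k'}:=L'_{F_{k'+1}}-L'_{F_{k'}}$, we have $L'_{N_0}(v_{k'}x_k)=L'_{N_0}(v_{k'}P_{k'}(x_k))$, hence $\|L'_{N_0}(v_{k'}x_k)\|_2\leq 2\|L'_{N_0}\|\,\|P_{k'}(x_k)\|_2$.

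\smallskip

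To close the argument we show $\omega\text{-}\lim_k\|P_{k'}(x_k)\|_2 \to 0$ as $k'\to\infty$. For each fixed $k$, the strips $P_{k'}(x_k)$ are supported on mutually disjoint bands $(F_{k'},F_{k'+1}]$ of left indices in the $\eta$-basis expansion of $x_k$, so the uniform Riesz basis bound gives $\sum_{k'}\|P_{k'}(x_k)\|_2^2\leq C\|x_k\|_2^2\leq C$ for a universal $C$. Taking $\omega\text{-}\lim_k$ of each finite partial sum,
\[
\sum_{k'}\bigl(\omega\text{-}\lim_k\|P_{k'}(x_k)\|_2\bigr)^2 \leq C,
\]
so $\omega\text{-}\lim_k\|P_{k'}(x_k)\|_2\to 0$ as $k'\to\infty$. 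Combining with the displayed inequality and letting $k'\to\infty$ gives $\omega\text{-}\lim_k\|L'_{N_0}(x_k)\|_2=0$. The main conceptual obstacle is recognizing that one must separate the two limits (in $k$ and in $k'$); once this is in place the remainder is a bookkeeping exercise in the three $O(2^{-k'})$ error terms and the Riesz basis estimates.
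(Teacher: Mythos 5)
Your proof is correct and rests on exactly the same ingredients as the paper's own argument: approximate commutation of $x_k$ with the fixed unitaries $u_{k'}$, right $C$-modularity of $L'_{N_0}$, the locality identity $L'_{N_0}(v_{k'}x)=L'_{N_0}\bigl(v_{k'}(L'_{F_{k'+1}}-L'_{F_{k'}})(x)\bigr)$, and the disjointness of the bands, which via the uniform Riesz bounds makes $\sum_{k'}\norm{P_{k'}(x_k)}_2^2$ uniformly bounded. The only difference is cosmetic and lies in the final pigeonhole: the paper sums $\norm{L'_{N_0}(v_ix_k)}_2^2$ over a window $N_1\le i\le N_2$ and divides by $N_2-N_1$, whereas you extract the conclusion term-by-term from the square-summability of $\omega\text{-}\lim_k\norm{P_{k'}(x_k)}_2$ --- the same averaging argument in disguise.
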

 
\begin{proof}

Fix a small $\epsilon>0$. First choose some large $N_1<N_2$ such that $2\sum_{i=N_1}^{N_2}\norm{v_i-u_i}_2^2\leq \epsilon$ and $\dfrac{4\norm{L'_{N_0}}^2C_0^2+1}{N_2-N_1}\leq \epsilon$. Then we have 
\begin{equation*}
\begin{split}
\lim_{k\rightarrow\omega}\sum_{i=N_1}^{N_2}\left\langle L'_{N_0}(v_i x_k),L'_{N_0}(v_ix_k)\right\rangle 
&\geq\lim_{k\rightarrow\omega}\sum_{i=N_1}^{N_2}\left\langle L'_{N_0}(u_i x_k), L'_{N_0}(u_ix_k)\right\rangle-\epsilon \\
&=\lim_{k\rightarrow\omega}\sum_{i=N_1}^{N_2}\left\langle L'_{N_0}(x_ku_i), L'_{N_0}(x_ku_i)\right\rangle-\epsilon \\
&= \lim_{k\rightarrow\omega}\sum_{i=N_1}^{N_2}\left\langle L'_{N_0}(x_k)u_i, L'_{N_0}(x_k)u_i\right\rangle-\epsilon \\
&=(N_2-N_1)\lim_{k\rightarrow\omega}\norm{L'_{N_0}(x_k)}_2^2-\epsilon.
\end{split}
\end{equation*}
 The second line uses the assumption that $(x_k)_k\in B'\cap M^{\omega}$ and the third line uses the fact that $L'_{N_0}$ is a right-$C$ modular map, i.e.
\[L'_{N_0}(xa)=L'_{N_0}(x)a, \forall x\in L^2(M)\ominus L^2(C), \forall a\in C.\]

Meanwhile,
\begin{equation*}
\begin{split}
\sum_{i=N_1}^{N_2}\left\langle L'_{N_0}\left(v_i x_k\right),L'_{N_0}(v_ix_k)\right\rangle 
&=\sum_{i=N_1}^{N_2}\left\langle L'_{N_0}\left(v_i \left(L'_{F_{i+1}}-L'_{F_i}\right)(x_k)\right),L'_{N_0}\left(v_i\left(L'_{F_{i+1}}-L'_{F_i}\right)(x_k)\right)\right\rangle\\
&\leq \norm{L'_{N_0}}^2\sum_{i=N_1}^{N_2}\left\langle v_i \left(L'_{F_{i+1}}-L'_{F_i}\right)(x_k),v_i\left(L'_{F_{i+1}}-L'_{F_i}\right)(x_k)\right\rangle\\
&\leq \sum_{i=N_1}^{N_2}\norm{L'_{N_0}}^2\norm{v_i}^2\norm{\left(L'_{F_{i+1}}-L'_{F_i}\right)(x_k)}_2^2\\
&\leq 4\norm{L'_{N_0}}^2\sum_{i=N_1}^{N_2}C_0\sum_{j\geq 1,F_{i}+1\leq n\leq F_{i+1},m\geq 0}\left|b^{j,k}_{n,m}\right|^2\\
&\leq 4\norm{L'_{N_0}}^2 C_0 \sum_{j\geq 1,0\leq  n\leq F_{N_2},m\geq 0}\left|b^{j,k}_{n,m}\right|^2\\
&\leq 4\norm{L'_{N_0}}^2C_0^2\norm{x_k}_2^2\leq 4\norm{L'_{N_0}}^2C_0^2.
\end{split}
\end{equation*}
Therefore, we conclude that $\lim_{k\rightarrow\omega}\norm{L'_{N_0}(x_k)}_2^2\leq\dfrac{ 4||L'_{N_0}||^2C_0^2+1}{N_2-N_1}\leq \epsilon$ can be made arbitrarily small. Thus the proof for Lemma~\ref{lem: step1} is complete.
\end{proof}

\begin{lem}\label{lem: step2}
 $\lim_{k\rightarrow\omega}\norm{L_{N_0}({x_k})}_2= 0$.
\end{lem}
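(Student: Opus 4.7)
The approach is to convert the claim about $L_{N_0}$ (cut-offs in the $\xi^i$-basis) into one about $L'_{N_0'}$ (cut-offs in the $\eta^i$-basis) for suitably enlarged $N_0'$, and then invoke Lemma~\ref{lem: step1}. Writing $x_k=\sum_{i,n,m}a^{i,k}_{n,m}\xi^i_{n,m}=\sum_{i,n,m}b^{i,k}_{n,m}\eta^i_{n,m}$, the Riesz-basis estimates reduce the problem to showing $\sum_{i,\,n\leq N_0,\,m}|a^{i,k}_{n,m}|^2\to 0$ along $\omega$. Substituting the expansions of $\eta^i_{n,m}$ in the $\xi^i$-basis from Lemma~\ref{lem: relations between different bases} into $\sum b^{i,k}_{n,m}\eta^i_{n,m}$ and reading off the coefficient of $\xi^i_{p,q}$ expresses each $a^{i,k}_{p,q}$ as a linear combination of the $b^{i,k}_{n,m}$'s with $n\geq p$, the coefficient of $b^{i,k}_{n,m}$ decaying as $K^{-(n-p)}$.

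For $l(i)\geq 2$, part (1) of Lemma~\ref{lem: relations between different bases} gives the finite four-term relation
\[
a^{i,k}_{n,m}=b^{i,k}_{n,m}-K^{-1}b^{i,k}_{n,m+2}-K^{-1}b^{i,k}_{n+2,m}+K^{-2}b^{i,k}_{n+2,m+2};
\]
a routine $|z_1+\cdots+z_4|^2\leq 4(|z_1|^2+\cdots+|z_4|^2)$ bound together with Riesz-basis estimates then yields $\sum_{l(i)\geq 2,\,n\leq N_0,\,m}|a^{i,k}_{n,m}|^2\leq \mathrm{const}\cdot\|L'_{N_0+2}(x_k)\|_2^2$, which vanishes along $\omega$ by Lemma~\ref{lem: step1} applied with $N_0+2$ in place of $N_0$ (the construction in Lemma~\ref{lem: approx unitaries 2} and the argument of Lemma~\ref{lem: step1} are valid for any cut-off).

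The subtle case is the finitely many indices with $l(i)=1$, where part (2) of Lemma~\ref{lem: relations between different bases} produces an infinite expansion and yields
\[
a^{i,k}_{p,q}=b^{i,k}_{p,q}-K^{-1}\bigl(b^{i,k}_{p,q+2}+b^{i,k}_{p+2,q}+\sigma b^{i,k}_{p+1,q+1}\bigr)+\sum_{\ell\geq 2}(-\sigma)^\ell K^{-\ell}\bigl(\sigma b^{i,k}_{p+\ell+1,q+\ell-1}+\sigma b^{i,k}_{p+\ell-1,q+\ell+1}+2b^{i,k}_{p+\ell,q+\ell}\bigr).
\]
A weighted Cauchy--Schwarz with weights $K^{-\ell}$ (summable since $K=2N+1\geq 3$) followed by an exchange of summation order gives an estimate of the form
\[
\sum_{l(i)=1,\,p\leq N_0,\,q}|a^{i,k}_{p,q}|^2\leq \mathrm{const}\cdot\sum_{i,\,n,\,m}\min\bigl(1,K^{-(n-N_0)}\bigr)\,|b^{i,k}_{n,m}|^2.
\]
For every $M\geq 1$, split the right-hand side at $n=N_0+M$: the portion $n\leq N_0+M$ is $\leq\mathrm{const}\cdot\|L'_{N_0+M}(x_k)\|_2^2\to 0$ along $\omega$ by Lemma~\ref{lem: step1}, while the tail $n>N_0+M$ is uniformly $\leq\mathrm{const}\cdot K^{-M}\|x_k\|_2^2\leq\mathrm{const}\cdot K^{-M}$. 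Hence $\limsup_{k\to\omega}$ of the left-hand side is bounded by $\mathrm{const}\cdot K^{-M}$ for every $M$; sending $M\to\infty$ and combining with the previous case completes the proof. The main obstacle is this infinite-range expansion in the $l(i)=1$ case; the geometric decay in $\ell$ together with the finiteness of such indices is exactly what enables the double-limit argument ($k\to\omega$ first, then $M\to\infty$) to close.
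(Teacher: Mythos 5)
Your proof is correct and follows essentially the same route as the paper's: the $l(i)\geq 2$ part is handled via the finite four-term relation and Lemma~\ref{lem: step1} at cutoff $N_0+2$, and the $l(i)=1$ part by splitting the infinite expansion into a finite head controlled by Lemma~\ref{lem: step1} at an enlarged cutoff and a geometrically decaying tail bounded uniformly in $k$, followed by the double limit. The only differences are cosmetic bookkeeping: your weighted Cauchy--Schwarz and $\limsup$-then-$M\to\infty$ formulation versus the paper's $\epsilon/3$ splitting with $K_0=N_1-N_0$.
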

\begin{proof}

 We use the relations between $\eta^i_{n,m}$ and $\xi^i_{n,m}$, as stated above in Lemma~\ref{lem: relations between different bases} and Lemma~\ref{lem: verifying new basis}.

First, since $x_k=\sum_{i\geq 1,l(i)\geq 2,n,m\geq 0}a^{i,k}_{n,m}\xi^i_{n,m}\oplus \sum_{i\geq 1,l(i)=1,n,m\geq 0}a^{i,k}_{n,m}\xi^i_{n,m}$, it suffices to consider separately the part with $i$'s such that $l(i)\geq 2$ and the part with $i$'s such that $l(i)=1$. 

For the $i$'s with $l(i)\geq 2$, recall that $\eta^i_{n,m}=\xi^i_{n,m}-K^{-1}\left(\xi^i_{n,m-2}+\xi^i_{n-2,m}\right)+K^{-2}\xi^i_{n-2,m-2}$ so that 
$a^i_{n,m}=b^i_{n,m}-\dfrac{b^i_{n,m+2}}{K}-\dfrac{b^i_{n+2,m}}{K}+\dfrac{b^i_{n+2,m+2}}{K^2}$. Therefore

\begin{equation*}
\begin{split}
\norm{L_{N_0}\left(\sum_{i\geq 1,l(i)\geq 2,n,m\geq 0}a^{i,k}_{n,m}\xi^{i,k}_{n,m}\right)}_2^2
&=\sum_{i\geq 1,l(i)\geq 2,N_0\geq n\geq 0,m\geq 0}\left|a^{i,k}_{n,m}\right|^2\\
&=\sum_{i\geq 1,l(i)\geq 2,N_0\geq n\geq 0,m\geq 0}\left|b^{i,k}_{n,m}-\dfrac{b^{i,k}_{n,m+2}}{K}-\dfrac{b^{i,k}_{n+2,m}}{K}+\dfrac{b^{i,k}_{n+2,m+2}}{K^2}\right|^2\\
&\leq 16\sum_{i\geq 1,l(i)\geq 2,N_0+2\geq n\geq 0,m\geq 0}\left|b^{i,k}_{n,m}\right|^2\\
&\leq 16 C_0\norm{L'_{N_0+2}\left(\sum_{i\geq 1,l(i)\geq 2,n,m\geq 0}b^{i,k}_{n,m}\eta^{i,k}_{n,m}\right)}_2^2,\\
\end{split}
\end{equation*}
and the last term goes to 0 as $k\rightarrow \omega$, by the previous lemma.

Now consider the $i$'s with $l(i)=1$. As there are only finitely many such $i$'s, we may restrict our attention to a single fixed $i$.

For some $\sigma \in \{1,-1\}$, we have that 
\begin{equation*}
\begin{split}
\sum_{n,m\geq 0}b^{i,k}_{n,m}\eta^{i,k}_{n,m}
&=\sum_{n,m}b^{i,k}_{n,m}\Bigg(\xi^{i,k}_{n,m}-\dfrac{\xi^{i,k}_{n-2,m}}{K}-\dfrac{\xi^{i,k}_{n,m-2}}{K}+\sigma \dfrac{\xi^{i,k}_{n-1,m-1}}{K}\\
&+\sum_{l\geq 2}\dfrac{(-\sigma)^l}{K^l}\left(\sigma \xi^{i,k}_{n-l-1,m-l+1}+\sigma\xi^{i,k}_{n-l+1,m-l-1}+2\xi^{i,k}_{n-l,m-l}\right)\Bigg)\\
&=\sum_{n,m}\Bigg(b^{i,k}_{n,m}-\dfrac{b^{i,k}_{n+2,m}}{K}-\dfrac{b^{i,k}_{n,m+2}}{K}+\dfrac{\sigma b^{i,k}_{n+1,m+1}}{K}\\
&+\sum_{l\geq 2}\dfrac{(-\sigma)^l}{K^l}\left(\sigma b^{i,k}_{n+l+1,m+l-1}+\sigma b^{i,k}_{n+l-1,m+l+1}+2b^{i,k}_{n+l,m+l}\right)\Bigg)\xi^{i,k}_{n,m}.
\end{split}
\end{equation*}

Therefore, for any fixed  $\epsilon>0, N_0\geq 0$, we find a large integer $N_1\gg N_0$, to be specified later, and we let $K_0=N_1-N_0$. By the triangle inequality, 
\begin{equation*}
\begin{split}
\left(\sum_{n\leq N_0,m\geq 0}|a^{i,k}_{n,m}|^2\right)^{1/2}
&\leq \left(\sum_{n\leq N_0,m\geq 0} \left|b^{i,k}_{n,m}-\dfrac{b^{i,k}_{n+2,m}}{K}-\dfrac{b^{i,k}_{n,m+2}}{K}+\dfrac{\sigma b^{i,k}_{n+1,m+1}}{K}\right|^2\right)^{1/2}\\
&+\sum_{2\leq l\leq K_0}\dfrac{1}{K^l}\left(\sum_{n\leq N_0,m\geq 0} \left|\sigma b^{i,k}_{n+l+1,m+l-1}+\sigma b^{i,k}_{n+k-1,m+k+1}+2b^{i,k}_{n+k,m+k}\right|^2\right)^{1/2}\\
&+\sum_{ l\geq  K_0+1}\dfrac{1}{K^l}\left(\sum_{n\leq N_0,m\geq 0} \left|\sigma b^{i,k}_{n+l+1,m+l-1}+\sigma b^{i,k}_{n+l-1,m+l+1}+2b^{i,k}_{n+l,m+l}\right|^2\right)^{1/2}.
\end{split}
\end{equation*}
We estimate the third term in the above inequality first:
\begin{equation*}
\begin{split}
&\sum_{ l\geq  K_0+1}\dfrac{1}{K^l}\left(\sum_{n\leq N_0,m\geq 0} \left|\sigma b^{i,k}_{n+l+1,m+l-1}+\sigma b^{i,k}_{n+l-1,m+l+1}+2b^{i,k}_{n+l,m+l}\right|^2\right)^{1/2}\\
&\leq \sum_{ l\geq  K_0+1}\dfrac{1}{K^l}\left(\sum_{n,m\geq 0} \left|\sigma b^{i,k}_{n+l+1,m+l-1}+\sigma b^{i,k}_{n+l-1,m+l+1}+2b^{i,k}_{n+l,m+l}\right|^2\right)^{1/2}\\
&\leq \sum_{ l\geq  K_0+1}\dfrac{1}{K^l}4\left(\sum_{n,m\geq 0} \left|b^{i,k}_{n,m}\right|^2\right)^{1/2}\\
&\leq \sum_{ l\geq  K_0+1}\dfrac{1}{K^l}4C_0\norm{x_k}_2\leq \dfrac{4C_0}{K^{K_0}(K-1)},
\end{split}
\end{equation*}
hence we can choose $N_1$ large enough so that $K_0$ is large, such that the third term is less than $\epsilon/3$, for any $k$.

Now we estimate the first and the second terms. To this end, we choose a large $k_0=k_0(N_1,\epsilon)$, such that for any $k\geq k_0$, we have that $4C_0K_0\left(\sum_{m\geq 0, n\leq N_1+1}\left|b^{i,k}_{n,m}\right|^2\right)^{1/2}$ is less than $\epsilon/3$. Thus both the first and the second term can be bounded above by $\epsilon/3$. Combine all these pieces together, we conclude that 
\[\norm{L_{N_0}\left(\sum_{n, m\geq 0}a^{i,k}_{n,m}\xi^{i,k}_{n,m}\right)}_2\leq C_0\left(\sum_{n\leq N_0,m\geq 0}|a^{i,k}_{n,m}|^2\right)^{1/2}\leq C_0\epsilon,\]

when $k$ is close enough to $\omega$. Since $\epsilon>0$ is arbitrary, we are done.
\end{proof}

\begin{proof}[Proof of Proposition~\ref{prop: s-AOP radial masa}]
The same proof for Lemma~\ref{lem: step2} shows that $\lim_{k\rightarrow\omega}\norm{R_{N_0}({x_k})}_2= 0$. So Lemma~\ref{lem: combinatorics from CFRW} applies.
\end{proof}

\begin{rem}
In fact, the same conclusion as in Proposition~\ref{prop: s-AOP radial masa} holds, if we replace the assumption ``$B\subset C$ diffuse'' by ``$B\subset C^{\omega}$ diffuse''. 
\end{rem}

\begin{thm}\label{thm: disjointness for radial masa}
The radial masa satisfies Peterson's conjecture.
\end{thm}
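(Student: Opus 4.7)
The plan is to deduce Peterson's conjecture for $C$ as a formal consequence of Proposition~\ref{prop: s-AOP implies disjointness} combined with Proposition~\ref{prop: s-AOP radial masa}. I unpack ``the radial masa satisfies Peterson's conjecture'' as the assertion that for every diffuse amenable subalgebra $D$ contained in $C$, the only maximal amenable subalgebra of $M$ containing $D$ is $C$ itself.

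First I would fix such a diffuse amenable $D \subset C$, together with an arbitrary maximal amenable subalgebra $Q \subset M$ containing $D$; the goal is to conclude $Q = C$. I would then check that all three hypotheses of Proposition~\ref{prop: s-AOP implies disjointness} apply to the inclusion $C \subset M$: (i) strong solidity of $M = L(\mathbb{F}_{N+1})$ is the theorem of Ozawa--Popa \cite{op10uniquecartan}; (ii) singularity of the radial masa $C$ is due to R\u{a}dulescu \cite{radulescu91radialmasa}; and (iii) the strong asymptotic orthogonality condition, for arbitrary diffuse $B \subset C$, is precisely what Proposition~\ref{prop: s-AOP radial masa} supplies. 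Applying Proposition~\ref{prop: s-AOP implies disjointness} with $A := C$ and $B := D$ then yields $Q \subset C$, and since $Q$ is maximal amenable while $C$ is itself amenable, this forces $Q = C$, which is the desired uniqueness.

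There is essentially no obstacle at this final stage: the technical heart of the argument has already been isolated in Proposition~\ref{prop: s-AOP radial masa}, whose proof required the joint use of the R\u{a}dulescu basis $\{\xi^i_{n,m}\}$ and the right-modular basis $\{\eta^i_{n,m}\}$ of $L^2(M) \ominus L^2(C)$, together with the two-step reduction encoded in Lemmas~\ref{lem: step1}--\ref{lem: step2}. The only care needed here is to match the precise formulation of Peterson's conjecture for a single masa against the hypothesis of Proposition~\ref{prop: s-AOP implies disjointness}. I would finally remark that running the identical argument with $B$ chosen to be a diffuse subalgebra of $Q \cap C$ (rather than $D$ itself) recovers the stronger Main Theorem announced in the Introduction: any amenable subalgebra of $M$ with diffuse intersection with $C$ is contained in $C$.
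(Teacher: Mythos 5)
Your proposal is correct and follows exactly the paper's own route: strong solidity of $L(\mathbb{F}_{N+1})$ from \cite{op10uniquecartan}, singularity of the radial masa from \cite{radulescu91radialmasa}, and the strong AOP of Proposition~\ref{prop: s-AOP radial masa} are combined via Proposition~\ref{prop: s-AOP implies disjointness} to conclude. The paper's proof is precisely this two-line deduction, so there is nothing to add.
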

\begin{proof}
It is shown in \cite{op10uniquecartan} that $L(\mathbb{F}_N), N\geq 2$ is strongly solid, and the fact that the radial masa is singular is shown in \cite{radulescu91radialmasa}(another proof can be found in \cite{allanroger03Laplacianmasa}). Therefore, Proposition~\ref{prop: s-AOP implies disjointness} and Proposition~\ref{prop: s-AOP radial masa} imply the result.
\end{proof}
\begin{rem}
One can also use \cite[Theorem 8.1]{houdayer14structurebogoljubov} and Proposition~\ref{prop: s-AOP radial masa} to conclude Theorem~\ref{thm: disjointness for radial masa}. We thank Boutonnet for pointing it out to us.
\end{rem}

In fact, one can state a more general structural result for the inclusion $C\subset L(\mathbb{F}_{N+1})$.
\begin{thm}\label{thm: gamma-stability for radial masa}
Let $M=L(\mathbb{F}_{N+1})$ be a free group factor with $1\leq N<\infty$ and let $C\subset M$ be the radial masa. If $Q\subset M$ is a von Neumann subalgebra that has a diffuse intersection with $C$, then there exists a sequence of central projections $e_n\in Z(Q), n\geq 0$ such that 
\begin{itemize}
\item $e_0Q\subset C$;
\item For all $n\geq 1$, $e_nQ$ is a non-amenable II$_1$ factor such that $e_n(Q'\cap M^{\omega})=e_n(Q'\cap M)$ is discrete and abelian (even contained in $C$).
\end{itemize}
\end{thm}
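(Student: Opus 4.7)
The strategy is to combine Proposition~\ref{prop: s-AOP radial masa} with the strong solidity of $M$ and the maximal amenability of $C$, adapting the argument of Proposition~\ref{prop: s-AOP implies disjointness} to each corner of $Q$. Set $B := Q \cap C$, diffuse by hypothesis. Strong solidity of $M = L(\mathbb{F}_{N+1})$ makes $B' \cap M$ amenable; since $B \subset C \subset B' \cap M$ with $C$ maximal amenable, $B' \cap M = C$. In particular $Z(Q) \subset Q' \cap M \subset C$, so every central projection of $Q$ lives inside the radial masa.

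Let $e_0 \in Z(Q) \subset C$ be the largest central projection with $Qe_0$ abelian (the supremum of such projections is itself such a projection). Then $Qe_0 \subset (Qe_0)' \cap e_0Me_0 = (Q'\cap M)e_0 \subset Ce_0$, establishing the first conclusion $e_0Q \subset C$. Put $z := 1 - e_0$. Maximality of $e_0$ forces $Qz$ to have no nonzero abelian direct summand, so for each atom $e$ of $Z(Q)z$ the corner $eQ$ is a II$_1$ factor containing the diffuse abelian subalgebra $Be \subset Ce$. The strong-AOP restricts to the corner $Ce \subset eMe$, so Proposition~\ref{prop: s-AOP implies disjointness} applies and precludes $eQ$ from being amenable---otherwise $eQ \subset Ce$ would be abelian, forcing $e \leq e_0$. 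Taking $\{e_n\}_{n\geq 1}$ to be the sequence of atoms of $Z(Q)z$ therefore produces non-amenable II$_1$ factor summands.

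It remains to confirm that these atoms exhaust $z$ (i.e.\ $Z(Q)z$ is atomic) and that $e_n(Q'\cap M^{\omega}) = e_n(Q'\cap M) \subset C$. Both reduce to the Gamma-stability statement $Q'\cap M^{\omega} \subset C^{\omega}$. Given $(x_k) \in Q'\cap M^{\omega}$, decompose $(x_k) = (E_C(x_k))_k + (y_k)$ with $(y_k)\perp C^{\omega}$. The $B$-modularity of $E_C$ and the inclusion $B \subset Q$ place $(y_k)$ in $B' \cap M^{\omega} \ominus C^{\omega}$, so Proposition~\ref{prop: s-AOP radial masa} yields $y_1(y_k)\perp(y_k)y_2$ in $L^2(M^{\omega})$ for all $y_1,y_2 \in M \ominus C$. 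The commutation $[q,(x_k)] = 0$, expanded via $q = E_C(q) + (q - E_C(q))$ and using the abelianness of $C^{\omega}$ to kill $[E_C(q),(E_C(x_k))_k]$, translates into a norm identity relating $y(y_k)$ and $(y_k)y$ (with $y := q - E_C(q)$) to commutators in $C$; varying $q$ and exploiting the diffuseness of $B$ should collapse both norms to zero and force $(y_k) = 0$.

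The main obstacle is this last estimate, i.e.\ the sharpening from strong-AOP to full Gamma-stability, analogous in spirit to Houdayer's passage in \cite[Theorem~8.1]{houdayer14structurebogoljubov}. Once $Q'\cap M^{\omega} \subset C^{\omega}$ is in hand, any diffuse family of unitaries $u_k \in Z(Q)z \subset C$ would represent a non-constant element of $Q'\cap M^{\omega}$ not lying in $Q'\cap M$, contradicting Gamma-stability and forcing atomicity of $Z(Q)z$; the discrete-abelian relative commutant statement then follows directly by cutting by each atom $e_n$.
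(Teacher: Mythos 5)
Your first half is sound and close to the paper's: identifying $e_0$ (the paper takes the maximal central projection with $e_0Q$ \emph{amenable} and feeds $Qe_0\oplus C(1-e_0)$ into Theorem~\ref{thm: disjointness for radial masa} to get $e_0Q\subset C$; your ``maximal abelian'' variant comes to the same thing), and using solidity plus maximal amenability to get $Q'\cap M\subset (Q\cap C)'\cap M=C$. The problem is the second half, which is where the actual content of the theorem lies, and there you have a genuine gap on two counts. First, you reduce both the atomicity of $Z(Q)(1-e_0)$ and the identity $e_n(Q'\cap M^{\omega})=e_n(Q'\cap M)$ to the Gamma-stability statement $Q'\cap M^{\omega}\subset C^{\omega}$, and then you do not prove that statement: the passage from the orthogonality $y_1(y_k)\perp (y_k)y_2$ supplied by Proposition~\ref{prop: s-AOP radial masa} to $(y_k)=0$ is exactly the hard estimate, and ``should collapse both norms to zero'' is not an argument. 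Second, even granting $Q'\cap M^{\omega}\subset C^{\omega}$, your deduction of atomicity is incorrect: a diffuse $Z(Q)(1-e_0)\subset C$ produces non-constant elements of $Q'\cap M^{\omega}$ that \emph{do} lie in $C^{\omega}$, so there is no contradiction with the inclusion you assumed; you would be contradicting the stronger statement $Q'\cap M^{\omega}=Q'\cap M$, which does not follow from $Q'\cap M^{\omega}\subset C^{\omega}$. Likewise, discreteness of $e_n(Q'\cap M^{\omega})$ does not follow merely from its being contained in $C^{\omega}$.

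The paper avoids the Gamma-stability route entirely. Discreteness of $Z(Q)(1-e_0)$ comes directly from solidity: a diffuse corner of the center would place the corresponding corner of $Q$ inside the relative commutant of a diffuse abelian algebra, hence make it amenable, contradicting maximality of $e_0$. Then, for each factor summand $e_nQ$, it invokes \cite[Lemma 2.7]{adrian15cartanfreeproducts} to split $(e_nQ)'\cap (e_nMe_n)^{\omega}$ into a discrete part and a diffuse part, and the argument from \cite[Proof of Theorem 4.3]{jessepeterson2009L2rigidity} to show that a diffuse part would force the corresponding corner of $e_nQ$ to be amenable, which is impossible; so the ultrapower relative commutant is discrete and equals $(e_nQ)'\cap e_nMe_n$. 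If you want to keep your approach, you must either actually carry out the strong-AOP-to-Gamma-stability upgrade (in the spirit of \cite[Theorem 8.1]{houdayer14structurebogoljubov}, as the paper's remark after Theorem~\ref{thm: disjointness for radial masa} notes) \emph{and} supply the separate argument ruling out diffuse central sequences coming from $C^{\omega}$ itself, or replace that whole paragraph with the solidity/L$^2$-rigidity argument above.
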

\begin{proof}
Let $e_0\in Z(Q)$ be the maximal projection such that $e_0Q$ is amenable. Then $Qe_0\oplus C(1-e_0)$ is amenable and has a diffuse intersection with $C$ so it is contained in $C$ by Theorem \ref{thm: disjointness for radial masa}.
Moreover, $Q(1-e_0)$ has a discrete center, by solidity of $M$. This gives a sequence of central projections $\{e_n\}_{n\geq 1}$ such that for all $n\geq 1$, $e_nQ$ is a non-amenable II$_1$ factor. \\
Now fix $n\geq 1$. By \cite[Lemma 2.7]{adrian15cartanfreeproducts}, one can find a central projection $e\in Z((e_nQ)'\cap e_nMe_n)$ such that 
\begin{itemize} 
\item $e((e_nQ)'\cap e_nMe_n)=e((e_nQ)'\cap (e_nMe_n)^\omega)$ is discrete;
\item $(e_n-e)((e_nQ)'\cap (e_nMe_n)^\omega)$ is diffuse.
\end{itemize}
By \cite[Proof of Theorem 4.3]{jessepeterson2009L2rigidity}, the fact that  $(e_n-e)((e_nQ)'\cap (e_nMe_n)^\omega)$ is diffuse implies that $(e_n-e)Q$ is amenable. Since $e_nQ$ has no direct summand, this forces $e=e_n$.

Finally, $(Q\cap C)'\cap M$ is amenable, again by solidity. As it contains $C$, it has to be equal to $C$. In particular $Q'\cap M\subset (Q\cap C)'\cap M\cap C$. So the last part of the theorem is true.
\end{proof}

\begin{rem}
In \cite[Theorem 3.1]{houdayer14gammastability}, Houdayer showed the general situation for free products of $\sigma$-finite von Neumann algebras, which contains the strong-AOP for the generator masa in a free group factor as a special case.
Also, the strong-AOP as in Proposition~\ref{prop: s-AOP radial masa} means that for any diffuse subalgebra $B$ of the radial masa $C$, the inclusion $C\subset M$ has the \textit{AOP relative to} $B$, in the sense of  \cite[Definition 5.1]{houdayer14structurebogoljubov}. The unique maximal injective extension for any diffuse subalgebra of the generator masa is first shown by Houdayer \cite[Theorem 4.1]{houdayer14gammastability}. A proof via the study of centralizers is obtained by Ozawa \cite{ozawa15centralizer}.
\end{rem}

\begin{rem}\label{rem:counterexamples1}
Note that the disjointness result as in Theorem \ref{thm: disjointness for radial masa} is not true for arbitrary maximal amenable masa of a II$_1$ factor. For instance, if the inclusion $A\subset M$ has some nice decomposition, then $A$ does not have the uniqueness property as the generator masa in the above corollary. We give some such examples:
\begin{itemize}
\item Let $M=A_1*_{A_0}A_2$ be the amalgamated free product with $A_i$ amenable, and  $A_0$ diffuse, $A_0\neq A_i$,  $i=1,2$, then $A_0$ can be contained in different maximal amenable subalgebras. 
\item Let $M_1,M_2$ both be the free group factor and $A_i\subset M_i$ the corresponding generator masa, $i=1,2$. Then 
$A=A_1\overline{\otimes}A_2$ is a maximal injective subalgebra inside $M=M_1\overline{\otimes}M_2$. However, many other injective subalgebras could contain the diffuse subalgebra $A_1\otimes 1$.
\item Let $\Lambda < \Gamma$ be a singular subgroup in the sense of Boutonnet and Carderi (\cite[Definition 1.2]{boutonnet14centralizer}) and suppose $\Gamma$ acts on a finite diffuse amenable von Neumann algebra $Q$. Then $Q\rtimes \Lambda$ is maximal injective inside $Q\rtimes \Gamma$, by \cite[Theorem 1.3]{boutonnet14centralizer}. However again there are lots of different injective subalgebras containing $Q$ but are not contained in $Q\rtimes \Lambda$.
\end{itemize}
\end{rem}

\begin{rem}\label{rem:counterexample2}

We would like to mention an example in the ultra-product setting. Let $A\subset M$ be a singular masa inside a separable II$_1$ factor. Then for any free ultrafilter $\omega$, $\mathcal{A}:=A^{\omega}$ is a maximal injective masa in $\mathcal{M}:=M^{\omega}$, a result due to Popa (\cite[Theorem 5.2.1]{popa14kadison-singer}). However, it is well known that any two separable abelian subalegebras in a ultraproduct of II$_1$ factors are unitarily conjugate (\cite[Lemma 7.1]{popa83orthogonal}). In particular, $A$ is both contained in a maximal injective masa and a maximal hyperfinite subfactor of $\mathcal{M}$.
\end{rem}

\small
\bibliographystyle{plain}
\bibliography{ref}

\begin{thebibliography}{10}

\bibitem{boutonnet13hyperbolic}
R{\'e}mi Boutonnet and Alessandro Carderi.
\newblock Maximal amenable subalgebras of von neumann algebras associated with
  hyperbolic groups.
\newblock {\em arXiv}, (1310.5864), 2013.

\bibitem{boutonnet14centralizer}
R{\'e}mi Boutonnet and Alessandro Carderi.
\newblock Maximal amenable von neumann subalgebras arising from maximal
  amenable subgroups.
\newblock {\em arXiv}, (1411.4093), 2014.

\bibitem{arnaud14maxinjective}
Arnaud Brothier.
\newblock The cup subalgebra of a {${\rm II}_1$} factor given by a subfactor
  planar algebra is maximal amenable.
\newblock {\em Pacific J. Math.}, 269(1):19--29, 2014.

\bibitem{CFRW2010radialmasa}
Jan Cameron, Junsheng Fang, Mohan Ravichandran, and Stuart White.
\newblock The radial masa in a free group factor is maximal injective.
\newblock {\em J. London Math. Soc.}, 82(2):787--809, 2010.

\bibitem{rieszbasis}
Ole Christensen.
\newblock Frames, {R}iesz bases, and discrete {G}abor/wavelet expansions.
\newblock {\em Bull. Amer. Math. Soc. (N.S.)}, 38(3):273--291 (electronic),
  2001.

\bibitem{connes76classification}
Alain Connes.
\newblock Classification of injective factors. {C}ases {$II_{1},$} {$II_{\infty
  },$} {$III_{\lambda },$} {$\lambda \not=1$}.
\newblock {\em Ann. of Math. (2)}, 104(1):73--115, 1976.

\bibitem{fugledekadison51normalcy}
Bent Fuglede and Richard~V. Kadison.
\newblock On a conjecture of {M}urray and von {N}eumann.
\newblock {\em Proc. Nat. Acad. Sci. U. S. A.}, 37:420--425, 1951.

\bibitem{ge03kadisonquestions}
Li~Ming Ge.
\newblock On ``{P}roblems on von {N}eumann algebras by {R}. {K}adison, 1967''.
\newblock {\em Acta Math. Sin. (Engl. Ser.)}, 19(3):619--624, 2003.
\newblock With a previously unpublished manuscript by Kadison, International
  Workshop on Operator Algebra and Operator Theory (Linfen, 2001).

\bibitem{ge96maxinjective}
Liming Ge.
\newblock On maximal injective subalgebras of factors.
\newblock {\em Adv. Math.}, 118(1):34--70, 1996.

\bibitem{primenessforfreegroupfactors}
Liming Ge.
\newblock Applications of free entropy to finite von neumann algebras, {II}.
\newblock {\em Ann. of Math. (2)}, 147(1):143--157, 1998.

\bibitem{houdayer14exoticmaxinjective}
Cyril Houdayer.
\newblock A class of {II$_1$} factors with an exotic abelian maximal amenable
  subalgebra.
\newblock {\em Trans. Amer. Math. Soc.}, 366(7):3693--3707, 2014.

\bibitem{houdayer14structurebogoljubov}
Cyril Houdayer.
\newblock Structure of {II$_1$} factors arising from free bogoljubov actions of
  arbitrary groups.
\newblock {\em Adv. Math.}, 260:414--457, 2014.

\bibitem{houdayer14gammastability}
Cyril Houdayer.
\newblock Gamma stability in free product von neumann algebras.
\newblock {\em Commun. Math. Phys.}, 336(2):831--851, 2015.

\bibitem{adrian15cartanfreeproducts}
Adrian Ioana.
\newblock Cartan subalgebras of amalgamated free product {II$_1$} factors.
\newblock {\em Ann. Sci. {\' E}cole Norm. Sup. (4)}, 48(1):71--130, 2015.

\bibitem{kadison84diagonal}
Richard~V. Kadison.
\newblock Diagonalizing matrices.
\newblock {\em Amer. J. Math.}, 106(6):1451--1468, 1984.

\bibitem{ringsofoperators4}
F.J. Murray and J.~von Neumann.
\newblock On rings of operators. {IV}.
\newblock {\em Ann. of Math. (2)}, 44(4):716--808, 1943.

\bibitem{ozawa04solidity}
Narutaka Ozawa.
\newblock Solid von {N}eumann algebras.
\newblock {\em Acta Math.}, 192(1):111--117, 2004.

\bibitem{ozawa15centralizer}
Narutaka Ozawa.
\newblock A remark on amenable von neumann subalgebras in a tracial free
  product.
\newblock {\em arXiv}, 1501.06373, 2015.

\bibitem{op10uniquecartan}
Narutaka Ozawa and Sorin Popa.
\newblock On a class of {${\rm II}_1$} factors with at most one {C}artan
  subalgebra.
\newblock {\em Ann. of Math. (2)}, 172(1):713--749, 2010.

\bibitem{jessepeterson2009L2rigidity}
Jesse Peterson.
\newblock {$L^2$}-rigidity in von neumann algebras.
\newblock {\em Invent. Math.}, 175:417--433, 2009.

\bibitem{petersonthom11cocycle}
Jesse Peterson and Andreas Thom.
\newblock Group cocycles and the ring of affiliated operators.
\newblock {\em Invent. Math.}, 185(3):561--592, 2011.

\bibitem{popa83maxinjective}
Sorin Popa.
\newblock Maximal injective subalgebras in factors associated with free groups.
\newblock {\em Adv. Math.}, 50:27--48, 1983.

\bibitem{popa83orthogonal}
Sorin Popa.
\newblock Orthogonal pairs of {$\ast $}-subalgebras in finite von {N}eumann
  algebras.
\newblock {\em J. Operator Theory}, 9(2):253--268, 1983.

\bibitem{popa03betti}
Sorin Popa.
\newblock On a class of type {${\rm II}_1$} factors with {B}etti numbers
  invariants.
\newblock {\em Ann. of Math. (2)}, 163(3):809--899, 2006.

\bibitem{popa14kadison-singer}
Sorin Popa.
\newblock A {II$_1$} factor approach to the {Kadison-Singer} problem.
\newblock {\em Commun. Math. Phys.}, 332(1):379--414, 2014.

\bibitem{pytlik81}
T.~Pytlik.
\newblock Radial functions on free groups and a decomposition of the regular
  representation into irreducible components.
\newblock {\em J. Reine Angew. Math.}, 326:124--135, 1981.

\bibitem{radulescu91radialmasa}
Florin R{\u{a}}dulescu.
\newblock Singularity of the radial subalgebra of {$\mathscr{L}(F_N)$} and the
  {P}uk{\' a}nszky invariant.
\newblock {\em Pacific J. Math.}, 151(2):297--306, 1991.

\bibitem{shen06tensor}
Junhao Shen.
\newblock Maximal injective subalgebras of tensor products of free group
  factors.
\newblock {\em J. Funct. Anal.}, 240(2):334--348, 2006.

\bibitem{allanroger03Laplacianmasa}
Allan~M. Sinclair and Roger~R. Smith.
\newblock The {L}aplacian {MASA} in a free group factor.
\newblock {\em Trans. Amer. Math. Soc.}, 355(2):465--475 (electronic), 2003.

\bibitem{freeentropyIII}
D.~Voiculescu.
\newblock The analogues of entropy and of fisher's information measure in free
  probability theory {III}: the absence of cartan subalgebras.
\newblock {\em Geom. and Funct. Anal.}, 6(1):172--199, 1996.

\end{thebibliography}

\end{document}